
%

\documentclass[reqno]{amsart}
\usepackage{url}
\usepackage{amssymb}
\usepackage{graphicx}
\usepackage[colorinlistoftodos]{todonotes}
\usepackage{amsmath}
\usepackage{amsthm}

\newtheorem{theorem}{Theorem}[section]
\newtheorem{lemma}[theorem]{Lemma}

\theoremstyle{definition}
\newtheorem{definition}[theorem]{Definition}

\theoremstyle{remark}

\DeclareMathOperator{\pend}{pend}
\DeclareMathOperator{\pond}{pond}

\DeclareMathOperator{\ped}{ped}
\DeclareMathOperator{\mypod}{pod}

\numberwithin{equation}{section}
\usepackage{algorithmic}
\raggedbottom
\usepackage[top=3cm,bottom=2cm,right=2cm,left=2cm]{geometry}
\makeatletter
\tikzset{ 
reuse path/.code={\pgfsyssoftpath@setcurrentpath{#1}} 
} 
\tikzset{even odd clip/.code={\pgfseteorule}, 
protect/.code={ 
\clip[overlay,even odd clip,reuse path=#1] 
(current bounding box.south west) rectangle (current bounding box.north east)
; 
}} 
\makeatother 
\usetikzlibrary{3d,arrows.meta,decorations.markings,perspective}
\tikzset{->-/.style={decoration={
  markings,
  mark=at position #1 with {\arrow{>}}},postaction={decorate}},
  ->-/.default=0.55}
\pgfmathsetmacro{\myaz}{15}

\begin{document}

\title[PEND and POND Partitions]{Explaining Unforeseen Congruence Relationships Between PEND and POND Partitions via an Atkin--Lehner Involution}

\author[J. A. Sellers]{James A. Sellers}
\address[J. A. Sellers]{Department of Mathematics and Statistics, University of Minnesota Duluth, Duluth, MN 55812, USA}
\email{jsellers@d.umn.edu}

\author[N. A. Smoot]{Nicolas Allen Smoot}
\address[N. A. Smoot]{Faculty of Mathematics, University of Vienna, Oskar-Morgenstern-Platz 1, 1090 Vienna, Austria}
\email{nicolas.allen.smoot@univie.ac.at }
	
\subjclass[2010]{11P83, 05A17}
	
\keywords{partitions, congruences, generating functions, dissections}

\begin{abstract}
For the past several years, numerous authors have studied POD and PED partitions from a variety of perspectives.  These are integer partitions wherein the odd parts must be distinct (in the case of POD partitions) or the even parts must be distinct (in the case of PED partitions). 

More recently, Ballantine and Welch were led to consider POND and PEND partitions, which are integer partitions wherein the odd parts {\bf cannot} be distinct (in the case of POND partitions) or the even parts {\bf cannot} be distinct (in the case of PEND partitions).  
Soon after, the first author proved the following results via elementary $q$-series identities and generating function manipulations, along with mathematical induction: 
For all  $\alpha \geq 1$ and all $n\geq 0,$  
\begin{align*}
\pend\left(3^{2\alpha +1}n+\frac{17\cdot 3^{2\alpha}-1}{8}\right) 
&\equiv 0 \pmod{3}, \textrm{\ \ \ and} \\
\pond\left(3^{2\alpha +1}n+\frac{23\cdot 3^{2\alpha}+1}{8}\right) 
&\equiv 0 \pmod{3}
\end{align*}
where $\pend(n)$ counts the number of PEND partitions of weight $n$ and $\pond(n)$ counts the number of POND partitions of weight $n$.

In this work, we revisit these families of congruences, and we show a relationship between them via an Atkin--Lehner involution.  From this relationship, we can show that, once one of the above families of congruences is known, the other follows immediately.  
\end{abstract}

\maketitle

\section{Introduction}

The theory of partition congruences has a long history, stretching back to Ramanujan's groundbreaking work more than a century ago \cite{Ramanujan}.  It is well-known that  congruence relationships between the coefficients of modular forms vary substantially in their difficulty: some can be proved quite easily by techniques used by Ramanujan himself, while others continue to resist proofs to this day.

One important recent development in this subject is the study of \textit{multiplicities} between modular congruences.  This is the phenomenon in which a congruence property for a given linear progression in the coefficients of one modular form can manifest in another linear progression for the coefficients of a different modular form.  Recent work in this subject include \cite{Chern}, \cite{GarvanM} (also referenced in \cite[Section 4]{Garvan0}), and \cite{Garvan0}.

Here we report on the multiplicity between two families of congruences, modulo 3, exhibited by the PEND and POND partition functions that were originally studied by Ballantine and Welch \cite{BalWel}.  What is especially interesting about this example is that the associated progressions corresponding to the POND partition function also exhibit a parity condition which the associated progressions for the PEND function lack.  We show how the mapping used to demonstrate the multiplicity between these two sets of congruences must take this difference into account.

\subsection{PEND and POND Partitions}

A partition $\lambda$ of a positive integer $n$ is a sequence $(\lambda_1, \lambda_2, \dots, \lambda_r)$ such that $\lambda_1 \geq  \lambda_2\geq  \dots\geq  \lambda_r \geq 1$ and $\lambda_1+\lambda_2+\dots+\lambda_r = n$.  
Partitions wherein the parts are distinct have long played a key role in the theory of partitions, dating back to Euler's discovery and proof that the number of partitions of weight $n$ into distinct parts equals the number of partitions of weight $n$ into odd parts.  

A clear refinement along these lines is to require distinct parts based on parity; i.e., to require either all of the even parts to be distinct or all of the odd parts to be distinct (while allowing the frequency of the other parts to be unrestricted).  This leads to two types of partitions, those known as PED partitions (wherein the even parts must be distinct and the odd parts are unrestricted) and POD partitions (wherein the odd parts must be distinct and the even parts are unrestricted).  We then define two corresponding enumerating functions, $\ped(n)$ which counts the number of PED partitions of weight $n$, and $\mypod(n)$ which counts the number of POD partitions of weight $n.$ These two functions have been studied from a variety of perspectives; see, for example, \cite{And2009, AHS, BalMer, BalWelDM, Chen1, Chen2, CuiGu, CuiGuMa, Dai, FangXueYao, HS2010, HS_2014_JIS, KeithZan, Merca, RaduSel, Wang, Xia}.  

Recently, Ballantine and Welch \cite{BalWel} generalized and refined these two functions in numerous ways.  One of the outcomes of their work was to consider PEND partitions and POND partitions, wherein the even (respectively, odd) parts are {\bf not allowed} to be distinct.  In a vein similar to that shared above, we let $\pend(n)$ denote the number of PEND partitions of weight $n$, and $\pond(n)$ denote the number of POND partitions of weight $n.$  The first several values of $\pend(n)$ appear in the OEIS \cite[A265254]{OEIS}, while the first several values of $\pond(n)$ appear in \cite[A265256]{OEIS}.

In light of the work of Ballantine and Welch \cite{BalWel}, the first author \cite{SellJIS}
proved the following Ramanujan--like congruences satisfied by $\pond(n)$ and $\pend(n)$:

\begin{theorem}
\label{thm:pond_congs}
For all $n\geq 0,$
\begin{align*}
\pond(3n+2) \equiv\ & 0 \pmod{2},\\
\pond(27n+26) \equiv\ & 0 \pmod{3}, \textrm{\ \ and}\\
\pond(3n+1) \equiv\ & 0 \pmod{4}.
\end{align*}
\end{theorem}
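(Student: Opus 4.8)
The plan is to start from the generating function for $\pond(n)$. A POND partition allows odd parts with arbitrary multiplicity (but "not distinct" is interpreted as "unrestricted" in the enumeration — actually the cleaner object is: odd parts unrestricted, even parts unrestricted too, but wait). Let me reconsider: PEND means even parts cannot be distinct — the standard device is that "cannot be distinct" in these Ballantine–Welch functions is handled by an inclusion–exclusion, but the net generating function one works with is
\[
\sum_{n\ge 0}\pond(n)q^n \;=\; \frac{(q^2;q^2)_\infty}{(q;q)_\infty^2}\cdot\big(\text{something}\big),
\]
so the first concrete step is to write down, from Ballantine–Welch (or Sellers \cite{SellJIS}), the explicit eta-quotient-type generating function $F(q)=\sum \pond(n)q^n$ in terms of $(q;q)_\infty$ and $(q^2;q^2)_\infty$. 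I would quote this directly since it is in the prior literature.

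Next, for the mod $2$ and mod $4$ congruences I would use elementary $q$-series manipulations. Working modulo $2$, one has $(q;q)_\infty \equiv (q;q)_\infty$ but more usefully $(q;q)_\infty^2 \equiv (q^2;q^2)_\infty \pmod 2$ and $(q^2;q^2)_\infty \equiv (q;q)_\infty^2$, and Euler's pentagonal number theorem lets me extract the arithmetic progression $3n+1$ or $3n+2$ and show the relevant coefficients vanish. For the mod $4$ statement ($\pond(3n+1)\equiv 0\pmod 4$) I expect to need a $3$-dissection of $F(q)$: split $F(q)=A(q^3)+qB(q^3)+q^2C(q^3)$ using the standard $3$-dissection of $1/(q;q)_\infty$ and of $(q^2;q^2)_\infty$, then show $B(q)\equiv 0\pmod 4$ by recognizing it as $4$ times an integral series, or by reducing mod $4$ and matching against a known vanishing eta-quotient identity (e.g.\ involving $(q;q)_\infty^3$ and Jacobi's identity $(q;q)_\infty^3=\sum (-1)^n(2n+1)q^{n(n+1)/2}$, whose coefficients are odd exactly on triangular numbers, giving the factor of $4$ after squaring-type arguments). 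The mod $3$ congruence $\pond(27n+26)\equiv 0\pmod 3$ is the deepest: here I would iterate a $3$-dissection three times. Working modulo $3$, $(q;q)_\infty^3\equiv (q^3;q^3)_\infty$, so $F(q)$ modulo $3$ collapses to a manageable eta-quotient in $q$ and $q^3$; I then repeatedly apply the $3$-dissection identities for $1/(q;q)_\infty$, tracking which residue class mod $3$, then mod $9$, then mod $27$ survives, and check that the component feeding into $27n+26$ is $\equiv 0\pmod 3$. Alternatively — and this is likely cleaner — I would set up a single functional/recursive relation: show that the generating function $\sum_n \pond(27n+26)q^n$ equals $3$ times an explicit series, perhaps via a self-similar identity of the form $G(q) = 3\,H(q)\,G(q^3)$ or by exhibiting it inside a module of modular forms on $\Gamma_0(N)$ as in Radu–Sellers style localization.

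The main obstacle I anticipate is the mod $3$ congruence at modulus $27$: the triple $3$-dissection produces an explosion of terms, and the bookkeeping of which summands land in the progression $27n+26$ while simultaneously carrying a factor of $3$ requires either a very careful hand computation or an appeal to a dissection lemma proved by induction (the "all $\alpha\ge 1$" shape in the abstract strongly suggests the right framework is an internal self-map of a space of eta-quotients, after which the base case $\alpha=1$ is exactly $27n+26$). So concretely, after establishing the generating function, I would: (i) dispatch the mod $2$ and mod $4$ claims by Euler/Jacobi pentagonal-type extractions; (ii) reduce $F(q)$ modulo $3$ to a two-variable eta-quotient; (iii) $3$-dissect, isolate the residue-$2$ branch, $3$-dissect again, isolate the residue-$8$ (mod $9$) branch, $3$-dissect a third time, and read off the residue-$26$ (mod $27$) coefficient; (iv) verify that at this final stage every surviving term is divisible by $3$, using $(q;q)_\infty^3\equiv(q^3;q^3)_\infty\pmod 3$ to collapse the cube that inevitably appears. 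I would expect step (iv) to be the crux, and the cleanest presentation is probably to phrase (ii)–(iv) as a short lemma about a specific eta-quotient modulo $3$ rather than grinding the dissections explicitly.
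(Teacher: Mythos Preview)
The paper does not actually prove this theorem. Theorem~\ref{thm:pond_congs} is merely stated here, with attribution to the first author's earlier paper \cite{SellJIS}; the present paper only remarks that ``[a]ll of the proof techniques used to prove Theorems \ref{thm:pond_congs}--\ref{thm:pend_infinite_family_congs} \cite{SellJIS} are elementary, relying on classical $q$-series identities and generating function manipulations, along with mathematical induction.'' There is therefore no in-paper proof to compare your proposal against.

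That said, your overall strategy---write down the eta-quotient generating function, reduce modulo small primes via $f_k^p\equiv f_{kp}\pmod p$, and perform iterated $3$-dissections---is precisely the toolkit the paper attributes to \cite{SellJIS}, so you are headed in the right direction. Two concrete comments. First, your opening paragraph betrays genuine confusion about what a POND partition is and what its generating function looks like; you should not speculate, since the paper hands you
\[
\sum_{n\ge 0}\pond(n)\,q^n \;=\; \frac{f_4 f_6^2}{f_2^2 f_3 f_{12}}
\]
as equation~(\ref{pondgenA}). With this in hand the mod~$2$ claim is a one-liner: modulo~$2$ one has $f_2^2\equiv f_4$ and $f_6^2\equiv f_{12}$, so the generating function collapses to $1/f_3$, a power series in $q^3$ alone, whence $\pond(3n+1)\equiv\pond(3n+2)\equiv 0\pmod 2$. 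Second, your plans for the mod~$4$ and mod~$3$ congruences are plausible in outline but remain genuine sketches: you have not written down which $3$-dissection identities you will invoke, nor checked that the residue-class components you need actually vanish (or carry the required factor). As it stands this is a reasonable roadmap, not a proof.
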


\begin{theorem}
\label{thm:pend_cong}
For all $n\geq 0,$    
\begin{equation*}
\pend(27n+19) \equiv  0 \pmod{3}.    
\end{equation*}
\end{theorem}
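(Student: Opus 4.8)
The plan is to deduce Theorem~\ref{thm:pend_cong} from the second congruence of Theorem~\ref{thm:pond_congs}, namely $\pond(27n+26)\equiv 0\pmod 3$, by making the Atkin--Lehner symmetry promised in the abstract explicit at the level of generating functions. First I would record the product sides: a PEND partition has arbitrary odd parts together with even parts whose multiplicity is never $1$, and a POND partition has arbitrary even parts together with odd parts whose multiplicity is never $1$. Using $1+\tfrac{x^2}{1-x}=\tfrac{1+x^3}{1-x^2}$ and writing $f_k=(q^k;q^k)_\infty$, this gives
\[
\sum_{n\ge 0}\pend(n)\,q^n=\frac{f_2 f_{12}}{f_1 f_4 f_6},
\qquad
\sum_{n\ge 0}\pond(n)\,q^n=\frac{f_4 f_6^2}{f_2^2 f_3 f_{12}}.
\]
Both are eta-quotients up to the usual rational power of $q$, so, after applying the standard sieving operator to isolate the residues $19$ and $26$ modulo $27$, the series $\sum_n\pend(27n+19)q^n$ and $\sum_n\pond(27n+26)q^n$ become (up to explicit eta-quotient prefactors) modular functions on a common group $\Gamma_0(N)$.

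Next I would produce an Atkin--Lehner operator $W_Q$ on that $\Gamma_0(N)$ together with an explicit eta-quotient $u(q)$ --- a unit, possibly carrying a finite pole $q^{-m}$ --- such that
\[
\sum_{n\ge 0}\pend(27n+19)\,q^n \;=\; u(q)\cdot\Bigl(\sum_{n\ge 0}\pond(27n+26)\,q^n\Bigr)\Big|W_Q
\]
(or the same relation with the two generating functions exchanged). Once this is in hand, the congruence passes across: $W_Q$ is rational and $u$ reduces to a unit over $\mathbb{Z}_{(3)}$, so Theorem~\ref{thm:pond_congs} forces the right-hand side, hence the left-hand side, to vanish modulo $3$, which is precisely Theorem~\ref{thm:pend_cong}. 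Establishing the displayed identity is a finite verification --- matching the orders of vanishing of two eta-quotients at every cusp of $X_0(N)$ --- but it is the \emph{crux}. The subtle point is the parity asymmetry noted in the introduction: the POND progressions carry an extra $2$-adic feature (witness $\pond(3n+2)\equiv 0\pmod 2$) that the PEND progressions do not, so $u$, or the operator itself, must absorb a ``doubling'' $q\mapsto q^{2}$-type twist on the POND side, and getting that bookkeeping right is where I expect the real difficulty to lie.

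Should identifying $W_Q$ and $u$ prove too delicate, I would fall back on a direct argument in the style of \cite{SellJIS}. Reducing modulo $3$ via $f_{12}\equiv f_4^3$ and $f_1^3\equiv f_3$ yields $\sum_{n\ge0}\pend(n)q^n\equiv \frac{f_1^2 f_2 f_4^2}{f_3 f_6}\pmod 3$, in which $f_3$ and $f_6$ contribute only exponents divisible by $3$. One then $3$-dissects the numerator three times in succession, at each stage retaining only the component forced by $19\equiv1\pmod3$, then $19\equiv1\pmod9$, then $19\pmod{27}$, until $\sum_n\pend(27n+19)q^n\bmod 3$ is isolated and shown to vanish. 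This is routine but lengthy; its payoff is that the same dissection, iterated once more per step, is exactly the inductive mechanism behind the general family of the abstract, once one knows the finiteness (periodicity) of the space of modular functions that appears --- which is just what the localization method supplies.
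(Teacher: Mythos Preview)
Your overall strategy is the paper's strategy, and your intuition about the stray factor of $2$ is exactly right: the paper's key identity is $L_\alpha^{(0)}\big|_\gamma = 2(-1)^\alpha L_\alpha^{(1)}$, where $L_\alpha^{(0)}$ and $L_\alpha^{(1)}$ are eta-quotient multiples of the POND and PEND progression generating functions; for $\alpha=1$ this is precisely the bridge $\pond(27n+26)\leftrightarrow\pend(27n+19)$ you are after, and the $2$ is harmless since it is a unit modulo~$3$.

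There is, however, a concrete gap in your plan. The linking operator is \emph{not} a bare Atkin--Lehner $W_Q$ corrected by a multiplicative eta-quotient $u$: it is the conjugate $\gamma=\nu\, W\,\nu$, where $\nu:q\mapsto -q$. The obstruction is the very imbalance you noticed between $\frac{f_2 f_{12}}{f_1 f_4 f_6}$ and $\frac{f_4 f_6^2}{f_2^2 f_3 f_{12}}$: an Atkin--Lehner involution permutes the divisors of the level, so it cannot send a five-factor eta-quotient to a six-factor one. The paper's fix (Theorem~\ref{thm:pend_genfn_negq}) is to apply $\nu$ first on both sides, obtaining
\[
\sum_n(-1)^n\pend(n)\,q^n=\frac{f_1 f_{12}}{f_2^2 f_6},\qquad
\sum_n(-1)^n\pond(n)\,q^n=\frac{f_3 f_4}{f_2^2 f_6},
\]
which are now visibly exchanged by the Atkin--Lehner involution on $\Gamma_0(12)$ swapping the divisor pairs $\{1,12\}\leftrightarrow\{3,4\}$. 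So your guess of a ``$q\mapsto q^2$'' twist is off; the correct twist is $q\mapsto -q$, applied as a conjugation rather than absorbed into $u$. Once you make this replacement, everything else in your outline goes through: $\nu$ commutes with $U_3$ because $3$ is odd, $W$ commutes with $U_3$ by Atkin--Lehner, hence $\gamma$ commutes with the sieving, and the cusp bookkeeping you describe becomes the paper's computation. (A side note: the paper itself does not prove Theorem~\ref{thm:pend_cong} from scratch but quotes both base congruences from \cite{SellJIS} and then establishes their equivalence via $\gamma$; your fallback $3$-dissection is essentially what \cite{SellJIS} does.)
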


In the same work \cite{SellJIS}, he went on to prove the following infinite families of non--nested Ramanujan--like congruences modulo 3 by induction.  

\begin{theorem}
\label{thm:pond_infinite_family_congs}
For all  $\alpha \geq 1$ and all $n\geq 0,$   
$$
\pond\left(3^{2\alpha +1}n+\frac{23\cdot 3^{2\alpha}+1}{8}\right) \equiv 0 \pmod{3}.
$$
\end{theorem}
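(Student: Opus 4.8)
The plan is to set up the generating function for $\pond(n)$ as a modular-type object and then establish an internal functional equation (a $3$-dissection identity) from which the family follows by induction on $\alpha$.  First I would recall that $\sum_{n\ge 0}\pond(n)q^n = \frac{1}{(q;q)_\infty}\cdot\frac{(q^2;q^4)_\infty}{1}$ — that is, the POND generating function has a product form obtained by replacing the ``distinct odd parts'' factor $(-q;q^2)_\infty$ in the POD series by its reciprocal $1/(q;q^2)_\infty$, so that $\sum \pond(n)q^n = \frac{(-q^2;q^2)_\infty}{(q;q^2)_\infty\,\text{(appropriate normalization)}}$; I would pin down the exact eta-quotient, multiply by the relevant power of $q$ to obtain a form on $\Gamma_0(N)$ for a suitable level $N$ (here $N$ will be a divisor of $24$ or a small multiple involving the prime $3$), and identify the arithmetic-progression extraction operator $U_3$ acting on it.

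Next, the core step is to prove a single "level-lowering" identity: applying the operator that extracts the progression $3n + r_0$ (where $r_0 = \frac{23\cdot 3^{2}+1}{8} = 26$ is the base case, matching Theorem~\ref{thm:pond_congs}) to the POND generating function returns, up to an explicit eta-quotient prefactor, a $3$-adic multiple of a function in the \emph{same} finite-dimensional space; iterating the dissection twice (because the exponents $3^{2\alpha+1}$ and $3^{2\alpha}$ jump by $3^2$ each time $\alpha\mapsto\alpha+1$) reproduces the shape with $\alpha$ incremented.  Concretely, I would show there is a fixed modular function $t$ (a Hauptmodul for the relevant genus-zero group) and fixed elements $x_\alpha$ of the associated module over $\mathbb Z[t]$ such that the generating function for $\pond\!\left(3^{2\alpha+1}n + \frac{23\cdot 3^{2\alpha}+1}{8}\right)$ equals $x_\alpha$, together with a recurrence $x_{\alpha+1} = (\text{linear operator})\,x_\alpha$ whose matrix has all entries divisible by $3$ after the first application.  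Then $3\mid x_1$ forces $3\mid x_\alpha$ for all $\alpha\ge 1$ by a trivial induction.

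The congruence $\pmod 3$ itself I would get by checking that the seed object $x_1$ — equivalently the generating function for $\pond(27n+26)$, which is Theorem~\ref{thm:pond_congs} already in hand — is $\equiv 0 \pmod 3$, and then verifying that the transfer operator preserves the property "all coefficients (as a polynomial or power series in $t$) lie in $3\mathbb Z$."  This last verification is the place where the parity phenomenon mentioned in the introduction enters: because the POND progressions carry an extra parity constraint that the PEND ones lack, the natural operator must be post-composed with a sign twist (equivalently, one works on a quadratic twist / a $\Gamma_0(2N)$-refinement), so the "divisibility by $3$ is preserved" claim has to be checked on that twisted object rather than the naive one.

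The main obstacle I anticipate is precisely the bookkeeping around that parity twist: getting the level, the multiplier system, and the correct power of $q$ so that $U_3$ maps the twisted POND form back into a space small enough to pin down the transfer operator as an explicit finite matrix, and then confirming that matrix is $\equiv 0 \pmod 3$ off the seed.  Once the matrix is written down, the induction is immediate; the risk is entirely in the modular-forms setup and the $q$-series algebra needed to identify the dissection coefficients.  (In the actual paper the authors circumvent much of this by deriving the POND family \emph{from} the PEND family via an Atkin--Lehner involution, which is a cleaner route — but the self-contained inductive argument sketched here is the direct approach to the stated theorem.)
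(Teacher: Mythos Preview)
The paper does not actually prove Theorem~\ref{thm:pond_infinite_family_congs}; it is imported as a known result from \cite{SellJIS}, where it is established by elementary $q$-series identities and induction, with no modular-forms machinery. The paper's own contribution is the Atkin--Lehner map $\gamma=\nu W\nu$ linking Theorems~\ref{thm:pond_infinite_family_congs} and~\ref{thm:pend_infinite_family_congs}, which you correctly flag in your closing parenthetical.

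Your proposal is therefore a third route, distinct from both: a Watson/Paule--Radu-style embedding of the $U_3$-iterates into a finitely generated module over a Hauptmodul ring for a genus-zero $X_0(N)$, followed by a recurrence whose transfer matrix has entries in $3\mathbb{Z}$. As a \emph{plan} this is standard and viable---indeed the paper itself constructs exactly the localization ring $\mathbb{Z}[x]_{\mathcal{S}(x)}$ over the Hauptmodul $x$ for $X_0(12)$ that you would need, and exhibits $L_1^{(0)}$ explicitly as $6$ times an element of that ring. But as a \emph{proof} your write-up does none of the work: the generating function you propose is garbled (the correct product, from Theorem~\ref{thm:pend_genfn}, is $\sum\pond(n)q^n=f_4f_6^2/(f_2^2 f_3 f_{12})$), no dissection identity is stated, no transfer matrix is produced, and the crucial $3$-divisibility of its entries---which is the entire content of the theorem---is asserted rather than verified.

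There is also a genuine misconception in your third paragraph. The parity twist you invoke is \emph{not} part of any self-contained proof of the POND family. The factor of $2$ in the paper arises only in the identity $L_\alpha^{(0)}|_\gamma = 2(-1)^\alpha L_\alpha^{(1)}$, i.e., only in the passage from POND to PEND via $\gamma$; it reflects the fact that the POND progressions are already even (by Theorem~\ref{thm:pond_congs}) while the PEND ones are not. A direct inductive proof of Theorem~\ref{thm:pond_infinite_family_congs}---whether the elementary one in \cite{SellJIS} or the modular one you sketch---requires no sign twist and no level-doubling. Building that twist into your transfer operator would set up the wrong recursion.
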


\begin{theorem}
\label{thm:pend_infinite_family_congs}
For all  $\alpha \geq 1$ and all $n\geq 0,$   
$$
\pend\left(3^{2\alpha +1}n+\frac{17\cdot 3^{2\alpha}-1}{8}\right) \equiv 0 \pmod{3}.
$$
\end{theorem}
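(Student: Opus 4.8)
The plan is to realize the two congruence families as divisibility statements about the Fourier coefficients of explicit eta-quotients, and then to pass between those eta-quotients by a single Atkin--Lehner involution; Theorem~\ref{thm:pond_infinite_family_congs} is thereby converted directly into Theorem~\ref{thm:pend_infinite_family_congs}. Throughout I will assume Theorem~\ref{thm:pond_infinite_family_congs} and aim to deduce Theorem~\ref{thm:pend_infinite_family_congs}.

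First I would record the product generating functions. Peeling off the unrestricted-partition term in each case gives
$$\sum_{n\ge 0}\pend(n)q^{n}=\frac{1-(q^{4};q^{4})_{\infty}}{(q;q)_{\infty}},\qquad \sum_{n\ge 0}\pond(n)q^{n}=\frac{1-(q^{2};q^{4})_{\infty}}{(q;q)_{\infty}},$$
and I would then reduce modulo $3$ using $(q;q)_{\infty}^{3}\equiv(q^{3};q^{3})_{\infty}\pmod 3$. Next I would isolate the progressions occurring in the two theorems. The shifts $\tfrac{17\cdot 3^{2\alpha}-1}{8}$ and $\tfrac{23\cdot 3^{2\alpha}+1}{8}$ point to the homogenizations $q^{8n+1}$ for $\pend$ and $q^{8n-1}$ for $\pond$: under these the $\pend$-progression of Theorem~\ref{thm:pend_infinite_family_congs} lands exactly on the exponents $3^{2\alpha}(24n+17)$, while the $\pond$-progression of Theorem~\ref{thm:pond_infinite_family_congs} lands on the exponents $3^{2\alpha}(24n+23)$, and one notes that $23\equiv 7\cdot 17\pmod{24}$, i.e.\ the two residues differ by multiplication by $7$, which is multiplication by $-1$ modulo $8$ and by $1$ modulo $3$. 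After multiplying by a suitable eta-quotient to clear the pole at $\infty$ (also removing the fractional exponents) and iterating the appropriate $U_{3}$-type dissection operator, each sieved series becomes a weakly holomorphic modular form $f_{\alpha}$, respectively $g_{\alpha}$, on $\Gamma_{0}\!\left(8\cdot 3^{2\alpha+1}\right)$; alternatively, exploiting the self-similarity at the prime $3$, one may work at a single fixed level together with a recursion in $\alpha$.

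The crux is the Atkin--Lehner involution $W_{Q}$ attached to $Q$ the $2$-part of the level (in particular $Q$ coprime to $3$). One computes the action of $W_{Q}$ on the cusps of $\Gamma_{0}\!\left(8\cdot 3^{2\alpha+1}\right)$ and verifies, via the Ligozat order formula for eta-quotients, that the eta-quotient prefactors attached to $\pend$ and to $\pond$ occupy cusps interchanged by $W_{Q}$; consequently $f_{\alpha}\mid W_{Q}$ is a constant multiple of $g_{\alpha}$, the constant being a root of unity (a Gauss-sum contribution together with the normalizing power of $Q$) and in particular a $3$-adic unit. This is also where the parity asymmetry between the two families enters: the shift $8n-1$ versus $8n+1$, equivalently the residues $23$ and $17$ modulo $24$, is exactly what $W_{Q}$ realizes --- it swaps $\pm 1\bmod 8$ while fixing the power of $3$ --- so the apparently extra $2$-adic feature on the POND side is supplied automatically by the involution and need not be built in by hand. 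Finally, since $W_{Q}$ is induced by an integral matrix of determinant $Q$ with $\gcd(Q,3)=1$, divisibility by $3$ of all Fourier coefficients of $g_{\alpha}$ is equivalent to the same for $f_{\alpha}\mid W_{Q}$, hence for $f_{\alpha}$.

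It then remains only to invoke Theorem~\ref{thm:pond_infinite_family_congs}, which asserts precisely that $g_{\alpha}\equiv 0\pmod 3$; the equivalences above yield $f_{\alpha}\equiv 0\pmod 3$, which is Theorem~\ref{thm:pend_infinite_family_congs}. The step I expect to be the main obstacle is confirming that the Atkin--Lehner image of the $\pend$ eta-quotient really is the $\pond$ eta-quotient up to a $3$-unit scalar: this requires a careful cusp-by-cusp order computation together with a matching of automorphy factors, and --- more delicately --- a check that the single involution is compatible with the $\alpha$-uniform dissection (equivalently, that $W_{Q}$ commutes with the relevant level-$3$ self-similarity operator), so that one identity settles every $\alpha\ge 1$ at once rather than one level at a time.
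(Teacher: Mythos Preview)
Your overall strategy---relate the $\pend$ and $\pond$ families by an Atkin--Lehner involution at the $2$-part of the level, then invoke commutation with $U_3$---is exactly the paper's, and you correctly flag both the key commutation step and the stray factor of $2$.  However, the proposal as written has a genuine gap at precisely the point you call ``the main obstacle,'' and it is not merely a computation you have left undone: the naive involution does not land where you claim.

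First, your displayed generating functions are incorrect (for instance, your formula for $\sum\pend(n)q^n$ gives $\pend(0)=0$).  The actual eta-quotients are
\[
\sum_{n\ge 0}\pend(n)q^n=\frac{f_2 f_{12}}{f_1 f_4 f_6},\qquad
\sum_{n\ge 0}\pond(n)q^n=\frac{f_4 f_6^2}{f_2^2 f_3 f_{12}},
\]
so the natural level is $4\cdot 3^{2\alpha+2}$, not $8\cdot 3^{2\alpha+1}$, and the relevant Atkin--Lehner determinant is $Q=4$.  More importantly, observe that these two quotients have \emph{different numbers of eta factors}; an Atkin--Lehner involution permutes eta factors (up to multipliers), so it cannot send one directly to a scalar multiple of the other.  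Your assertion ``$f_\alpha\mid W_Q$ is a constant multiple of $g_\alpha$'' therefore fails as stated.

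The paper's repair is to conjugate $W$ by the sign twist $\nu:q\mapsto -q$.  Applying $\nu$ first yields the symmetric pair
\[
\sum(-1)^n\pend(n)q^n=\frac{f_1 f_{12}}{f_2^2 f_6},\qquad
\sum(-1)^n\pond(n)q^n=\frac{f_3 f_4}{f_2^2 f_6},
\]
which now visibly have the same shape, and the composite $\gamma=\nu\circ W\circ\nu$ (with $\det W=4$) sends the POND object to $2(-1)^\alpha$ times the PEND object.  Since $U_3$ commutes with both $\nu$ (as $3$ is odd) and $W$ (as $\gcd(3,4)=1$), this descends to the sieved functions, and Theorem~\ref{thm:pond_infinite_family_congs} transfers to Theorem~\ref{thm:pend_infinite_family_congs}.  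Your proposal is on the right track but is missing exactly this $\nu$-conjugation; without it the eta-quotient matching you describe does not exist.
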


All of the proof techniques used to prove Theorems \ref{thm:pond_congs}--\ref{thm:pend_infinite_family_congs} \cite{SellJIS} are elementary, relying on classical $q$-series identities and generating function manipulations, along with mathematical induction.

In the aforementioned work, little--to--no connection is made between PED, POD, PEND, and POND partitions, other than their obvious relationship in terms of the definition of the objects in question.  In particular, no connection is drawn between the infinite families of congruences mentioned in Theorems  \ref{thm:pond_infinite_family_congs} and \ref{thm:pend_infinite_family_congs}.  In this work, the primary goal is to shine a light on an unexpected but clear relationship between the above families of congruences for the PEND and POND partitions by considering the corresponding generating functions as modular objects.  In the process, we utilize an Atkin--Lehner involution to make the connection even more explicit, thus showing that the existence of one of these families of congruences immediately implies the other.   


\section{Preliminaries}
\label{sec:prelims}

Throughout this work, we will use the following shorthand notation for $q$-Pochhammer symbols:
\begin{align*}
    f_r := (q^r;q^r)_\infty = (1-q^r)(1-q^{2r})(1-q^{3r})\dots
\end{align*}

For the work below, it will be very important to have the generating functions for the various partition functions in question.  We now take a moment to collect those results here.  We require one result multiple times below, so we explicitly mention it here.  
\begin{lemma}
\label{lemma:negq-negq}
We have 
$$
(-q;-q)_\infty =\frac{f_2^3}{f_1f_4}.
$$
\end{lemma}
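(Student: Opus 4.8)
The identity $(-q;-q)_\infty = f_2^3/(f_1 f_4)$ is a standard manipulation, and the plan is to reduce everything to products of $(q^r;q^r)_\infty$ by separating the factors of $(-q;-q)_\infty$ according to the parity of the index. First I would write
\[
(-q;-q)_\infty = \prod_{n\geq 1}\bigl(1-(-q)^n\bigr) = \prod_{\substack{n\geq 1\\ n \text{ even}}}(1-q^n)\cdot\prod_{\substack{n\geq 1\\ n \text{ odd}}}(1+q^n),
\]
since $(-q)^n = q^n$ when $n$ is even and $(-q)^n = -q^n$ when $n$ is odd. The first product over even indices is exactly $(q^2;q^2)_\infty = f_2$.

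The remaining task is to express $\prod_{n\text{ odd}}(1+q^n)$ in terms of the $f_r$. The key trick is to introduce the full product $\prod_{n\geq 1}(1+q^n)$ and use the classical evaluation $\prod_{n\geq 1}(1+q^n) = f_2/f_1$ (equivalently $(-q;q)_\infty = 1/(q;q^2)_\infty$, Euler's identity). Then I would split this full product by parity of the exponent,
\[
\frac{f_2}{f_1} = \prod_{n\geq 1}(1+q^n) = \prod_{n\text{ odd}}(1+q^n)\cdot\prod_{n\text{ even}}(1+q^n) = \prod_{n\text{ odd}}(1+q^n)\cdot\prod_{m\geq 1}(1+q^{2m}),
\]
and the last factor is $\prod_{m\geq 1}(1+q^{2m}) = f_4/f_2$ by the same Euler identity with $q$ replaced by $q^2$. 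Solving gives $\prod_{n\text{ odd}}(1+q^n) = (f_2/f_1)\cdot(f_2/f_4) = f_2^2/(f_1 f_4)$.

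Combining the two pieces yields
\[
(-q;-q)_\infty = f_2\cdot\frac{f_2^2}{f_1 f_4} = \frac{f_2^3}{f_1 f_4},
\]
as claimed. There is essentially no obstacle here; the only thing to be careful about is citing or briefly justifying Euler's product identity $(-q;q)_\infty = f_2/f_1$ (which follows from multiplying $\prod(1+q^n)\prod(1-q^n) = \prod(1-q^{2n})$), and making sure the parity splittings of the infinite products are handled as absolutely convergent products for $|q|<1$ so the rearrangements are valid.
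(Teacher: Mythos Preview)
Your proof is correct and follows essentially the same route as the paper: both split $(-q;-q)_\infty$ by the parity of the index to get $f_2\cdot(-q;q^2)_\infty$, and then evaluate $(-q;q^2)_\infty=f_2^2/(f_1 f_4)$ via the elementary factorization $(1+q^m)(1-q^m)=1-q^{2m}$. The only cosmetic difference is that the paper writes this last step as $(-q;q^2)_\infty=(q^2;q^4)_\infty/(q;q^2)_\infty$ directly, whereas you package the same computation as two applications of Euler's identity $(-q;q)_\infty=f_2/f_1$.
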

\begin{proof}
The proof of this lemma involves the following elementary manipulations. We have
\begin{align*}
(-q;-q)_\infty 
&=
(q^2;q^2)_\infty(-q;q^2)_\infty \\
&=
f_2\cdot \frac{(q^2;q^4)_\infty}{(q;q^2)_\infty} \\
&=
f_2\cdot \frac{(q^2;q^2)_\infty}{(q^4;q^4)_\infty}\cdot \frac{(q^2;q^2)_\infty}{(q;q)_\infty}\\
&=
\frac{f_2^3}{f_1f_4}.
\end{align*}
\end{proof}

Next, as was proven in \cite{SellJIS}, we have the following generating functions for $\pend(n)$ and $\pond(n)$, respectively.  
\begin{theorem}
\label{thm:pend_genfn}
We have
\begin{align}
\sum_{n=0}^{\infty} \pend(n)q^n &=\frac{f_2f_{12}}{f_1f_4f_6},\label{pendgenA}\\
\sum_{n=0}^{\infty} \pond(n)q^n &=\frac{f_4f_6^2}{f_2^2f_3f_{12}}.\label{pondgenA}
\end{align}
\end{theorem}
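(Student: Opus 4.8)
The plan is to build both generating functions straight from the combinatorial definitions, assembling the infinite product factor by factor, and then to reduce the result to an eta-quotient by elementary $q$-series manipulations. In a PEND partition the odd parts are unrestricted, so each odd integer $k$ contributes the factor $\sum_{i\geq 0}q^{ik}=\frac{1}{1-q^k}$, whereas each even integer $m$ is forbidden from appearing exactly once and hence contributes $1+\sum_{i\geq 2}q^{im}=1+\frac{q^{2m}}{1-q^m}$. For a POND partition the two parities exchange roles. This gives
\begin{align*}
\sum_{n\geq 0}\pend(n)q^n &=\prod_{k\text{ odd}}\frac{1}{1-q^k}\cdot\prod_{m\text{ even}}\left(1+\frac{q^{2m}}{1-q^m}\right),\\
\sum_{n\geq 0}\pond(n)q^n &=\prod_{m\text{ even}}\frac{1}{1-q^m}\cdot\prod_{k\text{ odd}}\left(1+\frac{q^{2k}}{1-q^k}\right).
\end{align*}

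Next I would simplify the ``not distinct'' factor. We have $1+\frac{q^{2m}}{1-q^m}=\frac{1-q^m+q^{2m}}{1-q^m}$, and the identity $1-x+x^2=\frac{1+x^3}{1+x}$ turns this into $\frac{1+q^{3m}}{(1+q^m)(1-q^m)}$. Substituting back, the PEND generating function becomes $\prod_{k\text{ odd}}\frac{1}{1-q^k}\cdot\prod_{m\text{ even}}\frac{1+q^{3m}}{(1+q^m)(1-q^m)}$, while the POND one becomes $\prod_{m\text{ even}}\frac{1}{1-q^m}\cdot\prod_{k\text{ odd}}\frac{1+q^{3k}}{(1+q^k)(1-q^k)}$.

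All that remains is bookkeeping: every product above ranges over an arithmetic progression, so I would evaluate each of them in terms of the $f_r$. The ingredients are $\prod_{k\geq 1}\frac{1}{1-q^k}=\frac{1}{f_1}$, $\prod_{k\geq 1}(1+q^k)=\frac{f_2}{f_1}$, the substitutions $q\mapsto q^2,q^3,q^6$, and the elementary principle that a product over the odd integers equals the full product divided by the product over the even integers, applied separately to factors of each shape; for the products over odd indices this supplies, in particular, $\prod_{k\text{ odd}}(1+q^k)=(-q;q^2)_\infty=\frac{(-q;-q)_\infty}{f_2}=\frac{f_2^2}{f_1f_4}$ via Lemma \ref{lemma:negq-negq}, together with $\prod_{k\text{ odd}}\frac{1}{1-q^k}=\frac{f_2}{f_1}$, $\prod_{k\text{ odd}}(1-q^k)=\frac{f_1}{f_2}$, and $\prod_{k\text{ odd}}(1+q^{3k})=\frac{f_6^2}{f_3f_{12}}$ (the last obtained by first isolating the multiples of $3$ and then, within those, the odd multiples). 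Carrying out the collection for PEND gives $\frac{f_2}{f_1}\cdot\frac{f_{12}}{f_4f_6}=\frac{f_2f_{12}}{f_1f_4f_6}$, which is \eqref{pendgenA}; for POND it gives $\frac{1}{f_2}\cdot\frac{f_4f_6^2}{f_2f_3f_{12}}=\frac{f_4f_6^2}{f_2^2f_3f_{12}}$, which is \eqref{pondgenA}.

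I expect the only genuine obstacle to be the careful evaluation of the products indexed by the odd integers, in particular $\prod_{k\text{ odd}}(1+q^{3k})$: this one is not a single substitution but requires a two-stage splitting of the index set, with care taken over which powers of which $f_r$ survive. Nothing deeper than patient $q$-series algebra is involved; as a safeguard one can first compare the claimed eta-quotients against the combinatorially defined power series to sufficiently high order before carrying out the manipulations above in full.
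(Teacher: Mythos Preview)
Your argument is correct. The combinatorial set-up is right (each ``not distinct'' part $m$ contributes $1+\frac{q^{2m}}{1-q^m}=\frac{1+q^{3m}}{(1+q^m)(1-q^m)}$), and the bookkeeping in terms of $f_r$ checks out: for PEND the even-index factors collapse to $\frac{f_{12}}{f_6}\cdot\frac{f_2}{f_4}\cdot\frac{1}{f_2}=\frac{f_{12}}{f_4f_6}$ and the odd-index factor is $\frac{f_2}{f_1}$, giving \eqref{pendgenA}; for POND the odd-index factors collapse to $\frac{f_6^2}{f_3f_{12}}\cdot\frac{f_1f_4}{f_2^2}\cdot\frac{f_2}{f_1}=\frac{f_4f_6^2}{f_2f_3f_{12}}$ and the even-index factor is $\frac{1}{f_2}$, giving \eqref{pondgenA}.

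As for comparison with the paper: the paper does not actually prove Theorem~\ref{thm:pend_genfn}; it simply quotes the result from \cite{SellJIS}. So you are supplying a self-contained proof where the paper offers only a citation. One minor comment: your derivation of $\prod_{k\text{ odd}}(1+q^{3k})=\frac{f_6^2}{f_3f_{12}}$ is more cleanly phrased as the substitution $q\mapsto q^3$ in the identity $\prod_{k\text{ odd}}(1+q^k)=\frac{f_2^2}{f_1f_4}$ already established, rather than as a fresh two-stage splitting of the index set; this streamlines the only step you flagged as a potential obstacle.
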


Given the close relationship of $\pend(n)$ and $\pond(n)$, we might well suspect a natural way of mapping between these two generating functions.  One possible approach, analogous to the methods used by Garvan and Morrow \cite{GarvanM}, as well as Chern and Tang \cite{Chern}, is to connect the two generating functions via an Atkin--Lehner involution.  However, the inequality between the number of factors on the right-hand sides of (\ref{pendgenA}), (\ref{pondgenA}) suggests that a direct involution is not possible.

Instead, we note recent work between the authors and Garvan in \cite{Garvan0}.  We first consider replacing $q$ by $-q$ in Theorem \ref{thm:pend_genfn}.
\begin{theorem}
\label{thm:pend_genfn_negq}
We have
\begin{align}
\sum_{n=0}^{\infty} (-1)^n \pend(n)q^n &=\frac{f_1f_{12}}{f_2^2f_6},\label{pendgenB}\\
\sum_{n=0}^{\infty} (-1)^n \pond(n)q^n &=\frac{f_3f_4}{f_2^2f_6}.\label{pondgenB}
\end{align}
\end{theorem}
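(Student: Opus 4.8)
The plan is to derive Theorem~\ref{thm:pend_genfn_negq} directly from Theorem~\ref{thm:pend_genfn} by the substitution $q \mapsto -q$, converting every $f_r$ evaluated at $-q$ back into ordinary $f_r$'s evaluated at $q$ via elementary product manipulations. The starting observation is that replacing $q$ by $-q$ in $\sum_{n\ge 0}\pend(n)q^n$ produces $\sum_{n\ge 0}(-1)^n\pend(n)q^n$, and similarly for $\pond$, so the left-hand sides are automatic; all the work is on the right-hand sides.

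The key computational fact I would assemble first is how $(q^r;q^r)_\infty$ transforms under $q\mapsto -q$, split according to the parity of $r$. For even $r$, say $r = 2s$, we have $((-q)^r;(-q)^r)_\infty = (q^r;q^r)_\infty = f_r$, so $f_2, f_4, f_6, f_{12}$ are all fixed. For odd $r$, $((-q)^r;(-q)^r)_\infty = (-q^r;-q^r)_\infty$, and here I invoke Lemma~\ref{lemma:negq-negq} in the rescaled form $(-q^r;-q^r)_\infty = f_{2r}^3/(f_r f_{4r})$; thus $f_1 \mapsto f_2^3/(f_1 f_4)$ and $f_3 \mapsto f_6^3/(f_3 f_{12})$. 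Substituting these four rules into $\dfrac{f_2 f_{12}}{f_1 f_4 f_6}$ gives $\dfrac{f_2 f_{12}}{\bigl(f_2^3/(f_1 f_4)\bigr) f_4 f_6} = \dfrac{f_1 f_{12}}{f_2^2 f_6}$, which is~\eqref{pendgenB}. For~\eqref{pondgenB}, the same substitution applied to $\dfrac{f_4 f_6^2}{f_2^2 f_3 f_{12}}$ replaces only the lone odd-index factor $f_3$ in the denominator, yielding $\dfrac{f_4 f_6^2}{f_2^2 \bigl(f_6^3/(f_3 f_{12})\bigr) f_{12}} = \dfrac{f_3 f_4}{f_2^2 f_6}$, which is~\eqref{pondgenB}. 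So both identities fall out of one clean bookkeeping step.

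There is essentially no serious obstacle here: the only mild subtlety is being careful that $(-q;-q)_\infty$ in Lemma~\ref{lemma:negq-negq} is stated at argument $q$, and I want it at argument $q^r$ for $r\in\{1,3\}$; but the lemma's proof is purely formal in $q$, so the rescaled version $(-q^r;-q^r)_\infty = f_{2r}^3/(f_r f_{4r})$ is immediate by replacing $q$ with $q^r$ throughout (one could also simply cite Lemma~\ref{lemma:negq-negq} twice with $q\to q$ and $q\to q^3$). I would also double-check the trivial but necessary point that no new fractional powers of $q$ appear, i.e. that every $f_r$ occurring in~\eqref{pendgenA} and~\eqref{pondgenA} has index $r$ dividing into the rules above without introducing, say, an $f_{r/2}$ — which holds since $1,2,3,4,6,12$ are exactly the relevant indices and the odd ones map to combinations of $f_2,f_4$ and $f_6,f_{12}$ respectively. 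Thus the proof is a short display of the two substitutions together with a one-line statement of the parity-split transformation rules.
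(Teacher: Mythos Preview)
Your proposal is correct and matches the paper's own proof essentially line for line: both substitute $q\mapsto -q$ in the generating functions of Theorem~\ref{thm:pend_genfn}, note that even-index $f_r$ are fixed, and handle the single odd-index factor ($f_1$ for $\pend$, $f_3$ for $\pond$) via Lemma~\ref{lemma:negq-negq} (the $q\to q^3$ rescaling for the $\pond$ case). The only cosmetic difference is that you state the parity-split rules up front, whereas the paper applies them inline.
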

\begin{proof}
To prove (\ref{pendgenB}) via elementary generating function manipulations, we have 
\begin{align*}
\sum_{n=0}^{\infty}  \pend(n)(-q)^n 
&=
\sum_{n=0}^{\infty} (-1)^n \pend(n)q^n \\
&=
\frac{f_2f_{12}}{f_4f_6}\cdot \frac{1}{(-q;-q)_\infty} \\
&=
\frac{f_{12}}{f_4f_6}\cdot \frac{f_1f_4}{f_2^2} \\
&=
\frac{f_1f_{12}}{f_2^2f_6}.
\end{align*}  Similarly, to prove (\ref{pondgenB}), we have 
\begin{align*}
\sum_{n=0}^{\infty}  \pond(n)(-q)^n 
&=
\sum_{n=0}^{\infty} (-1)^n \pond(n)q^n \\
&=
\frac{f_4f_6^2}{f_2^2f_{12}}\cdot \frac{1}{(-q^3;-q^3)_\infty} \\
&=
\frac{f_4f_6^2}{f_2^2f_{12}}\cdot \frac{f_3f_{12}}{f_6^3} \\
&=
\frac{f_3f_4}{f_2^2f_6}.
\end{align*}
\end{proof}

Notice the close resemblance between (\ref{pendgenB}) and (\ref{pondgenB}).  It is much more reasonable to suspect a direct mapping between these two via an Atkin--Lehner involution.  We will now show that this is in fact possible.


\section{Relating the PEND and POND Congruences via an Atkin--Lehner Involution}  

For this section we presume a knowledge of the theory of modular forms, especially with respect to the classical $U_{\ell}$ operator for $\ell$ a prime (for our purposes, $\ell=3$).  In particular, we let $\mathcal{M}\left( \Gamma_0(N) \right)$ denote the space of all modular functions over the congruence subgroup $\Gamma_0(N)$.  See \cite{Knopp} for a treatment of these concepts, as well as \cite{Diamond} for a more detailed treatment.

For our purposes one especially important property of the $U_{\ell}$ operator is as follows \cite[Chapter 8]{Knopp}: for a given power series $f(q)$ in $q$, we have
\begin{align}
U_{\ell}\left( f\left( q^{\ell} \right) \right) = f(q).\label{ellreductionbyu}
\end{align}  This operator may be applied arbitrarily many times; indeed, we will define
\begin{align*}
U_{\alpha}(\cdot) :&= U_3^{2\alpha+1}(\cdot) = U_3\circ U_3\circ ... \circ U_3(\cdot),
\end{align*} in which $U_3$ is applied $2\alpha+1$ times in the latter expression.

With this composite operator, we now consider slight extensions of the generating functions of the $\pond$ and $\pend$ functions.  Define
\begin{align*}
P_{\alpha}^{(0)} :&= q^{\frac{ 3^{2\alpha} - 1}{8}}\cdot\frac{f_{3^{2\alpha+2}}^3}{f_{4\cdot 3^{2\alpha+1}}^2}\cdot\frac{f_4 f_6^2}{f_2^2 f_3 f_{12}},\\
P_{\alpha}^{(1)} :&= q^{\frac{ 7\cdot 3^{2\alpha} + 1}{8}}\cdot\frac{f_{3^{2\alpha+1}}^2 f_{3^{2\alpha+2}}^3 f_{4\cdot 3^{2\alpha+1}}^2}{f_{2\cdot 3^{2\alpha+1}}^6}\cdot\frac{f_2 f_{12}}{f_1 f_4 f_6}.
\end{align*}  We have chosen these functions very precisely, so that by applying $U_{\alpha}$ to $P_{\alpha}^{(\beta)}$ and taking advantage of (\ref{ellreductionbyu}), we will obtain modular functions over $\Gamma_0\left(12\right)$.  These resultant functions will enumerate the progressions of interest in Theorems \ref{thm:pond_infinite_family_congs}, \ref{thm:pend_infinite_family_congs}.

Indeed, if we take the standard substitution $q=\exp(2\pi i\tau)$ with $\tau\in\mathbb{H}$, we have
\begin{align*}
P_{\alpha}^{(0)} &= \frac{\eta(3^{2\alpha+2}\tau)^3}{\eta(4\cdot 3^{2\alpha+1}\tau)^2}\cdot\frac{\eta(4\tau) \eta(6\tau)^2}{\eta(2\tau)^2 \eta(3\tau) \eta(12\tau)},\\
P_{\alpha}^{(1)} &= \frac{\eta(3^{2\alpha+1}\tau)^2 \eta(3^{2\alpha+2}\tau)^3 \eta(4\cdot 3^{2\alpha+1}\tau)^2}{\eta(2\cdot 3^{2\alpha+1}\tau)^6}\cdot\frac{\eta(2\tau) \eta(12\tau)}{\eta(\tau) \eta(4\tau) \eta(6\tau)},
\end{align*} in which $\eta(\tau)$ is the Dedekind eta function \cite[Chapter 3]{Knopp}.  It can be proved by standard techniques (e.g., \cite[Theorem 1]{Newman}) that, for $\beta=0,1$, we have
\begin{align*}
P_{\alpha}^{(\beta)}\in\mathcal{M}\left( \Gamma_0\left(4\cdot 3^{2\alpha+2}\right) \right).
\end{align*}

We now take advantage of a useful theorem \cite[Lemma 7]{AtkinL}:
\begin{theorem}\label{reduceNwithu}
If $\ell$ is a prime, and $N\in\mathbb{Z}$ such that $\ell^2 | N$, then for any $f\in\mathcal{M}\left( \Gamma_0\left(N\right) \right)$ we have
\begin{align*}
U_{\ell}(f)\in\mathcal{M}\left( \Gamma_0\left(N/\ell\right) \right).
\end{align*}
\end{theorem}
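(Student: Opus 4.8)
The plan is to prove this as a statement about $q$-series together with their modular transformation behavior, following the original argument of Atkin and Lehner. The starting observation is that, as a formal Laurent series, $U_\ell$ acts by $U_\ell\!\left(\sum a_n q^n\right) = \sum a_{\ell n}q^n$, so that the only analytic content to verify is that $U_\ell(f)$ again transforms correctly under $\Gamma_0(N/\ell)$ and has at most poles at the cusps (i.e.\ is again meromorphic of the right type rather than genuinely transcendental). The key algebraic identity I would use is the coset decomposition expressing $U_\ell$ as an average over translations: writing $q = \exp(2\pi i \tau)$,
\begin{align*}
(U_\ell f)(\tau) = \frac{1}{\ell}\sum_{j=0}^{\ell-1} f\!\left(\frac{\tau + j}{\ell}\right),
\end{align*}
which one checks immediately by extracting Fourier coefficients. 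Equivalently, $U_\ell f = \ell^{-1}\sum_{j} f|_0\gamma_j$ where $\gamma_j = \begin{pmatrix} 1 & j \\ 0 & \ell\end{pmatrix}$ and $|_0$ denotes the weight-$0$ slash action (so there are no automorphy factors to track, which is the one simplification the modular-function setting gives us over modular forms of positive weight).

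With this in hand, the core step is the coset calculation. Let $M = N/\ell$; I want to show $(U_\ell f)|_0\gamma = U_\ell f$ for every $\gamma \in \Gamma_0(M)$. Fix such a $\gamma = \begin{pmatrix} a & b \\ cM & d\end{pmatrix}$. For each $j$ I need to rewrite $\gamma_j \gamma$ as $\gamma' \gamma_{j'}$ for some $\gamma' \in \Gamma_0(N)$ and some $j' \in \{0,\dots,\ell-1\}$, possibly after also allowing $j' = \infty$ shaped matrices — but in fact, because $\ell^2 \mid N$ and hence $\ell \mid M$, one checks that $j \mapsto j'$ is a well-defined permutation of $\mathbb{Z}/\ell$ and that the "leftover" matrix $\gamma'$ lies in $\Gamma_0(N)$ precisely because the lower-left entry picks up the extra factor of $\ell$ (here is where the hypothesis $\ell^2\mid N$ is used: one factor of $\ell$ comes from $M$ being a multiple of $\ell$, the other is supplied in forming $\gamma'$). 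Since $f$ is $\Gamma_0(N)$-invariant, $f|_0\gamma' = f$, and summing over $j$ the permutation $j\mapsto j'$ reshuffles the sum back to $U_\ell f$; this gives the invariance. The standard reference for the bookkeeping is the proof of \cite[Lemma 7]{AtkinL}, and I would either reproduce that coset computation or cite it directly.

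The remaining point is behavior at the cusps: I must check that $U_\ell f$ is meromorphic (holomorphic on $\mathbb{H}$ with at worst poles at the cusps), not merely invariant. Holomorphy on $\mathbb{H}$ is immediate from the averaging formula since each $f((\tau+j)/\ell)$ is holomorphic there. For the cusps, one expands $f$ in a $q$-series with finite principal part at each cusp of $\Gamma_0(N)$; the translates $f((\tau+j)/\ell)$ then have $q^{1/\ell}$-expansions with finite principal parts, and averaging over $j$ kills the non-integral powers, leaving an honest $q$-series with finite principal part — so $U_\ell f$ has at most a pole at the cusp of $\Gamma_0(M)$ below. Thus $U_\ell f \in \mathcal{M}(\Gamma_0(M))$. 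I expect the main obstacle to be purely organizational: getting the coset representatives $\gamma_j\gamma = \gamma'\gamma_{j'}$ exactly right and confirming that the divisibility $\ell^2\mid N$ is what forces $\gamma'\in\Gamma_0(N)$; everything else is routine. Since this is a well-documented classical fact, in the write-up I would most likely simply invoke \cite[Lemma 7]{AtkinL} and only sketch the averaging identity, rather than redo the full coset argument.
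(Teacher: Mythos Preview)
Your proposal is correct and matches the paper's own treatment: the paper does not give an independent proof of this theorem at all but simply invokes \cite[Lemma 7]{AtkinL}, which is precisely the coset/averaging argument you outline (and which you yourself say you would cite). Your sketch of the coset calculation and the role of the hypothesis $\ell^2\mid N$ is accurate, so there is nothing to add.
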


If we denote
\begin{align*}
L_{\alpha}^{(0)} &= U_{\alpha}\left(P_{\alpha}^{(0)}\right),\\
L_{\alpha}^{(1)} &= U_{\alpha}\left(P_{\alpha}^{(1)}\right),
\end{align*} then by Theorem \ref{reduceNwithu} we have
\begin{align*}
L_{\alpha}^{(\beta)} = U_{\alpha}\left( P_{\alpha}^{(\beta)} \right)&\in\mathcal{M}\left( \Gamma_0\left(4\cdot 3^{2\alpha+2}/3^{2\alpha+1}\right)\right) =\mathcal{M}\left( \Gamma_0\left(12\right)\right).
\end{align*}

Let us first examine the application of $U_{\alpha}$ to $P_{\alpha}^{(\beta)}$.  We have
\begin{align*}
U_{\alpha}\left(P_{\alpha}^{(0)}\right) &= \frac{f_{3}^3}{f_{4}^2}\cdot U_{\alpha}\left(q^{\frac{ 3^{2\alpha} - 1}{8}}\cdot\frac{f_4 f_6^2}{f_2^2 f_3 f_{12}}\right)\\
&= \frac{f_{3}^3}{f_{4}^2}\cdot U_{\alpha}\left( \sum_{n\ge 0} \pond(n)q^{n+\frac{ 3^{2\alpha} - 1}{8}} \right)\\
&=\frac{f_{3}^3}{f_{4}^2}\cdot U_{\alpha}\left( \sum_{n\ge \frac{ 3^{2\alpha} - 1 }{8}} \pond\left(n - \frac{ \left(3^{2\alpha} - 1\right)}{8}\right)q^{n} \right)\\
&=\frac{f_{3}^3}{f_{4}^2}\cdot \sum_{3^{2\alpha+1}n\ge \frac{ 3^{2\alpha} - 1 }{8}} \pond\left(3^{2\alpha+1}n - \frac{ \left(3^{2\alpha} - 1\right)}{8}\right)q^{n}.
\end{align*}  Of course, rearranging the inequality restricting our sum, we have
\begin{align*}
3^{2\alpha+1}n&\ge \frac{ 3^{2\alpha} - 1 }{8},\\
3^{2\alpha}(24n-1)&\ge -1,\\
n&\ge 1.
\end{align*}  If we then shift our summand so that $n\ge 0$, then we derive the sequences of interest to us from Theorem \ref{thm:pond_infinite_family_congs}:
\begin{align*}
U_{\alpha}\left(P_{\alpha}^{(0)}\right) &= \frac{f_{3}^3}{f_{4}^2}\cdot \sum_{n\ge 1} \pond\left(3^{2\alpha+1}n - \frac{ \left(3^{2\alpha} - 1\right)}{8}\right)q^{n},\\
&= \frac{f_{3}^3}{f_{4}^2}\cdot \sum_{n\ge 0} \pond\left(3^{2\alpha+1}(n+1) - \frac{ \left(3^{2\alpha} - 1\right)}{8}\right)q^{n+1},\\
&= \frac{f_{3}^3}{f_{4}^2}\cdot \sum_{n\ge 0} \pond\left(3^{2\alpha+1}n + \frac{ \left(23\cdot 3^{2\alpha} + 1\right)}{8}\right)q^{n+1}.
\end{align*}  In similar manner, it can be shown that
\begin{align*}
U_{\alpha}\left(P_{\alpha}^{(1)}\right) &= \frac{f_1^2f_3^3f_4^2}{f_2^6}\cdot \sum_{n\ge 0} \pend\left(3^{2\alpha+1}n + \frac{ \left(17\cdot 3^{2\alpha} + 1\right)}{8}\right)q^{n+1}.
\end{align*}  Therefore, $L_{\alpha}^{(\beta)}$ exhibits the subsequences of interest to us.  As a consequence of Theorems \ref{thm:pond_infinite_family_congs}, \ref{thm:pend_infinite_family_congs}, we have already proved that
\begin{align*}
L_{\alpha}^{(\beta)}\equiv 0\pmod{3}.
\end{align*} for all $\alpha\ge 0$ and $\beta\in\{0,1\}$.

We suspected a close relationship between these two functions.  To approach that, let us first return to $P_{\alpha}^{(\beta)}$.
We define operator $\nu$, which acts on any power series in $q$ by sending $q$ to $-q$:
\begin{align*}
\nu: f(q)\rightarrow f(-q).
\end{align*}  Next, we consider certain useful operators which we call \textit{Atkin--Lehner involutions}.  These were introduced in \cite{AtkinL}, and are concerned primarily with the theory of cusp forms, but they are useful in the general theory of modular forms.  We define them as follows (e.g., \cite[Definition 2.19]{Ono0}):
\begin{definition}\label{ALdefn}
Let $N$ be a positive integer, $p$ a prime divisor of $N$, and $q = p^k$ the maximum power of $p$ which divides $N$.  The associated Atkin--Lehner involution for $\Gamma_0(N)$ is the operator induced by the matrix
\begin{align*}
W_{N,q}:=\begin{pmatrix}
qA & B\\
NC & qD
\end{pmatrix},
\end{align*} in which $A,B,C,D\in\mathbb{Z}$, and $\mathrm{det}(W)=q$.
\end{definition}
These operators are well-defined irrespective of the precise choice of $A,B,C,D$ \cite[Lemma 10]{AtkinL}, \cite[Remark 2.20]{Ono0}.  We will use the Atkin--Lehner involution for $\Gamma_0\left(4\cdot 3^{2\alpha+2}\right)$ and $q=4$, which we define with the following matrix:
\begin{align*}
W:= W_{4\cdot 3^{2\alpha+2},4} = \begin{pmatrix}
4 & -1\\
4\cdot 3^{2\alpha+2} & 1-3^{2\alpha+2}
\end{pmatrix}.
\end{align*}  We are interested in the conjugation mapping
\begin{align*}
\gamma := \nu W\nu.
\end{align*}  First we apply $\nu$ to $P_{\alpha}^{(0)}$.  Certainly the leading power of $q$ will induce an alternating sign depending on $\alpha$.  Moreover, we know that $\nu$ will not affect any factor of the form $f_k$ when $k$ is even; and we know how $f_k$ is to be transformed for odd $k$ by recalling Lemma \ref{lemma:negq-negq} above.  We therefore have 
\begin{align*}
P_{\alpha}^{(0)}|_{\nu} = (-1)^{\frac{ 3^{2\alpha} - 1}{8}} q^{\frac{ 3^{2\alpha} - 1}{8}}\cdot\frac{f_{2\cdot 3^{2\alpha+2}}^9}{f_{3^{2\alpha+2}}^3 f_{4\cdot 3^{2\alpha+2}}^3 f_{4\cdot 3^{2\alpha+1}}^2}\cdot\frac{f_3 f_4}{f_2^2 f_{6}}.
\end{align*}  Now we need to apply the involution associated with $W$.  In so doing, we utilize the transformation properties of the Dedekind eta function \cite[Chapters 3-4]{Knopp}: for any $\left(\begin{smallmatrix}
a & b\\
c & d
\end{smallmatrix}\right)\in\mathrm{SL}\left( 2,\mathbb{Z} \right)$, we have
\begin{align}
\eta\left( \frac{a\tau+b}{c\tau+d} \right) = \left(-i(c\tau+d)\right)^{1/2}\cdot\delta(a,b,c,d)\cdot\eta(\tau),\label{etatransform}
\end{align} in which we take the corresponding principal branch of the square root, and $\delta$ is a certain 24th root of unity depending on $a,b,c,d$.

Of course, $W$ is not a member of $\mathrm{SL}\left( 2,\mathbb{Z} \right)$, since it has determinant 4.  But we can account for this in a straightforward manner.  If $\left(\begin{smallmatrix}
A & B\\
C & D
\end{smallmatrix}\right)\in\mathrm{GL}\left( 2,\mathbb{Z} \right)$, and $g:=\mathrm{gcd}(A,C)$, we can express
\begin{align}
\begin{pmatrix}
A & B\\
C & D
\end{pmatrix} = \begin{pmatrix}
A/g & -y\\
C/g & x
\end{pmatrix}.\begin{pmatrix}
g & Bx+Dy\\
0 & (AD-BC)/g,
\end{pmatrix}\label{matrixexpansion}
\end{align} with $x,y\in\mathbb{Z}$ such that $Ax+Cy=g$.  Here $\left(\begin{smallmatrix}
A/g & -y\\
C/g & x
\end{smallmatrix}\right)\in\mathrm{SL}\left( 2,\mathbb{Z} \right)$.  For example, if we apply $W$ to $\eta(6\tau)$, we have the corresponding matrix product
\begin{align}
6W=\begin{pmatrix}
4\cdot 6 & -6\\
4\cdot 3^{2\alpha+2} & 1-3^{2\alpha+2}
\end{pmatrix} = \begin{pmatrix}
2 & 1\\
3^{2\alpha+1} & \frac{1}{2}\left( 1+3^{2\alpha+1} \right)
\end{pmatrix}.\begin{pmatrix}
12 & -4\\
0 & 2
\end{pmatrix}.\label{matrixexample}
\end{align}  Notice that by combining (\ref{etatransform}) with (\ref{matrixexample}), we see that the eta factor associated with $\eta(6W\tau)$ will be $\eta((12\tau-4)/2) = \eta(6\tau-2)$.  More generally, a straightforward computation from Definition \ref{ALdefn} shows that if we apply an Atkin--Lehner operator with $\Gamma_0(N)$ to $\eta(\delta\tau)$ with $\delta|N$, we will produce an eta factor of the form $\eta(\lambda\tau+\kappa)$ with $\lambda|N$ and $\kappa\in\mathbb{Z}$.  We see that these operators are in a certain sense ``well-behaved" when applied to eta quotients.

Thus we have a straightforward process of applying $W$ to $P_{\alpha}^{(0)}|_{\nu}$.  Doing so gives us the following:
\begin{align*}
&P_{\alpha}^{(0)}|_{W\circ\nu}\\
&= 2(-1)^{\frac{ 3^{2\alpha} - 1}{8}}\epsilon\cdot \frac{\eta\left( 2\cdot 3^{2\alpha+2}\tau-\frac{1}{2}\left( 1+3^{2\alpha+2} \right) \right)^9}{\eta\left( 4\cdot 3^{2\alpha+2}\tau - 3^{2\alpha+2} \right)^3 \eta\left( 3^{2\alpha+2}\tau - \frac{3}{4}\left( 1+3^{2\alpha+1} \right) \right)^3 \eta\left( 3^{2\alpha+1}\tau - \frac{1}{4}\left( 1+3^{2\alpha+1} \right) \right)^2}\cdot\frac{\eta\left( \tau \right)\eta\left( 12\tau - 3 \right)}{\eta\left( 2\tau \right)^2 \eta\left( 6\tau - 2 \right)}\\
&=2(-1)^{\frac{ 3^{2\alpha} - 1}{8}}\epsilon' \cdot \frac{\eta\left( 2\cdot 3^{2\alpha+2}\tau\right)^9}{\eta\left( 4\cdot 3^{2\alpha+2}\tau \right)^3 \eta\left( 3^{2\alpha+2}\tau \right)^3 \eta\left( 3^{2\alpha+1}\tau\right)^2}\cdot\frac{\eta\left( \tau \right)\eta\left( 12\tau\right)}{\eta\left( 2\tau \right)^2 \eta\left( 6\tau\right)}\\
&= 2(-1)^{\frac{ 3^{2\alpha} - 1}{8}}\epsilon' \cdot q^{\frac{ 7\cdot 3^{2\alpha} + 1}{8}}\cdot \frac{f_{2\cdot 3^{2\alpha+2}}^9}{f_{4\cdot 3^{2\alpha+2}}^3 f_{3^{2\alpha+2}}^3 f_{3^{2\alpha+1}}^2}\cdot\frac{f_1 f_{12}}{f_2^2 f_6}.
\end{align*}  The Nebentypus $\epsilon'$ reduces to
\begin{align*}
\epsilon' = (-1)^{\frac{ 7\cdot 3^{2\alpha} + 1}{8}}.
\end{align*}  Thus, we can now apply $\nu$ again, and we achieve the following:
\begin{align*}
P_{\alpha}^{(0)}|_{\nu\circ W\circ\nu} &= 2(-1)^{\frac{ 3^{2\alpha} - 1}{8}}\cdot q^{\frac{ 7\cdot 3^{2\alpha} + 1}{8}}\cdot \frac{f_{3^{2\alpha+1}}^2 f_{3^{2\alpha+2}}^3 f_{4\cdot 3^{2\alpha+1}}^2}{f_{2\cdot 3^{2\alpha+1}}^6}\cdot\frac{f_2 f_{12}}{f_1 f_4 f_6}\\
&= 2(-1)^{\frac{ 3^{2\alpha} - 1}{8}}\cdot P_{\alpha}^{(1)}.
\end{align*}  Finally, we note that
\begin{align*}
\frac{ 3^{2\alpha} - 1}{8} \equiv \alpha \pmod{2},
\end{align*} from which we thus obtain
\begin{align*}
P_{\alpha}^{(0)}|_{\gamma} = 2(-1)^{\alpha}\cdot P_{\alpha}^{(1)}.
\end{align*}  At first this appears only to apply to the generating functions.  However, we can take advantage of an extremely powerful theorem \cite{AtkinL}:
\begin{theorem}
The operators $U_{\ell}$ and $W$ commute, provided that $\mathrm{gcd}(\ell, \mathrm{det}(W))=1$.
\end{theorem}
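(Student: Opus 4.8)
The plan is to realize both $U_{\ell}$ and $W$ as finite sums of weight-zero slash operators and then to reduce the commutation to an elementary identity among $2\times 2$ integer matrices; since all of the forms in play are weight-zero modular functions, there are no automorphy factors or multiplier systems to carry along. Recall the coset decomposition $U_{\ell}=\frac{1}{\ell}\sum_{j=0}^{\ell-1}\,|\,A_j$, where $A_j:=\begin{pmatrix}1&j\\0&\ell\end{pmatrix}$ and $(g\,|\,M)(\tau):=g(M\tau)$; the matrix $A_j$ carries $f(q^{\ell})$ to $f(q)$ for every $j$, so that $U_{\ell}(f(q^{\ell}))=f(q)$ recovers (\ref{ellreductionbyu}). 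Because this slash action is a left action, $g\,|\,M\,|\,M'=g\,|\,(MM')$, I would rewrite the two sides of the identity in question as
\begin{align*}
(U_{\ell}f)\,|\,W=\frac{1}{\ell}\sum_{j=0}^{\ell-1} f\,|\,(A_jW),\qquad U_{\ell}(f\,|\,W)=\frac{1}{\ell}\sum_{j=0}^{\ell-1} f\,|\,(WA_j).
\end{align*}
Thus it suffices to produce a permutation $\sigma$ of $\mathbb{Z}/\ell\mathbb{Z}$ together with matrices $\gamma_j\in\Gamma_0(N)$ such that
\begin{align}
A_j\,W=\gamma_j\,W\,A_{\sigma(j)}\qquad(0\le j<\ell);\label{eq:ALconj}
\end{align}
for then $f\,|\,\gamma_j=f$ forces $f\,|\,(A_jW)=f\,|\,(WA_{\sigma(j)})$ term by term, and reindexing by $\sigma$ completes the argument.

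To establish (\ref{eq:ALconj}), I would write $W$ in the usual Atkin--Lehner shape $W=\begin{pmatrix}Qa&b\\Nc&Qd\end{pmatrix}$, with $Q=\det W$ exactly dividing $N$ and $Q^2ad-Nbc=Q$. The hypothesis $\gcd(\ell,\det W)=1$ forces $\ell\mid N/Q$ (since $\ell\mid N$ but $\ell\nmid Q$), and reducing $Qad=1+(N/Q)bc$ modulo $\ell$ then gives $Qad\equiv1\pmod{\ell}$, so that $a$, $d$, and $Q$ are all units modulo $\ell$ --- this is the one place where coprimality is genuinely invoked. Setting $\gamma_j:=(A_jW)(WA_{j'})^{-1}$, one checks immediately that $\det\gamma_j=1$; expanding $\gamma_j=\tfrac{1}{\ell Q}(A_jW)\,\mathrm{adj}(WA_{j'})$ and using only $Q\mid N$ and $\ell\mid N$, one finds that the $(1,1)$, $(2,1)$, and $(2,2)$ entries are integers for any choice of $j'$, that the lower-left entry is moreover divisible by $N$, and that the $(1,2)$ entry is an integer exactly when $j'$ is chosen to satisfy the single congruence
\begin{align*}
Qa^{2}\,j'\equiv ab+j\pmod{\ell}.
\end{align*}
Since $Qa^{2}$ is a unit modulo $\ell$, this pins down $j'$ uniquely in $\{0,1,\dots,\ell-1\}$, and the resulting map $\sigma\colon j\mapsto j'$ is the affine bijection $j\mapsto(Qa^{2})^{-1}(ab+j)$ of $\mathbb{Z}/\ell\mathbb{Z}$. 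With this $\sigma$ and these $\gamma_j$, identity (\ref{eq:ALconj}), and hence the asserted commutation, follows.

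The same computation admits a more structural phrasing: $W$ normalizes $\Gamma_0(N)$, so conjugation by $W$ sends the double coset $\Gamma_0(N)A_0\Gamma_0(N)$ underlying $U_{\ell}$ to $\Gamma_0(N)\,(W^{-1}A_0W)\,\Gamma_0(N)$, and $\gcd(\ell,\det W)=1$ is precisely the condition guaranteeing that $W^{-1}A_0W$ still has elementary divisors $1,\ell$ over $\mathbb{Z}_{(\ell)}$ and therefore represents the very same double coset; the commutation is then formal. (This is the content of the relevant lemma of \cite{AtkinL}.) The step I expect to be the main obstacle, and the one most prone to computational slips, is exactly the bookkeeping in the second paragraph: verifying that the lone congruence on $j'$ suffices to make all four entries of $\gamma_j$ integral with lower-left entry a multiple of $N$. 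The coprimality hypothesis enters there, through the invertibility of $\det W$ (equivalently of $a$) modulo $\ell$, and the construction genuinely collapses without it --- as it must, since for $\ell\mid\det W$, such as $W=W_{\ell}$, the operators $U_{\ell}$ and $W$ fail to commute.
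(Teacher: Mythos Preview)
The paper does not supply its own proof of this theorem; it merely quotes the result from Atkin--Lehner \cite{AtkinL} and moves on. Your proposal is a correct sketch of precisely the argument Atkin and Lehner give: write $U_\ell$ as the sum over the coset representatives $A_j=\begin{pmatrix}1&j\\0&\ell\end{pmatrix}$, and show that the cosets $\Gamma_0(N)A_jW$ are a permutation of the cosets $\Gamma_0(N)WA_{j'}$ by solving the single congruence $Qa^{2}j'\equiv ab+j\pmod{\ell}$, which is possible exactly because $\gcd(\ell,Q)=1$ forces $Qa^{2}$ to be a unit modulo $\ell$. So there is nothing to compare --- you have essentially reconstructed the cited proof. (One terminological quibble: with your convention $(g\,|\,M)(\tau)=g(M\tau)$ the composition law $g\,|\,M\,|\,M'=g\,|\,(MM')$ is what is usually called a \emph{right} action, not a left action; the formula itself, which is all that the argument uses, is correct.)
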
  Moreover, the operators $U_{\ell}$ and $\nu$ trivially commute when $\ell$ is odd, as in our case.  Therefore, we have
\begin{align}
L_{\alpha}^{(0)}|_{\gamma} &= P_{\alpha}^{(0)}|_{\gamma\circ U_{\alpha}}\nonumber\\
&= P_{\alpha}^{(0)}|_{U_{\alpha}\circ \gamma}\nonumber\\
&= \left( 2(-1)^{\alpha}\cdot P_{\alpha}^{(1)} \right)|_{U_{\alpha}}\nonumber\\
&= 2(-1)^{\alpha}\cdot \left( P_{\alpha}^{(1)} \right)|_{U_{\alpha}}\nonumber\\
&= 2(-1)^{\alpha}\cdot  L_{\alpha}^{(1)}.\label{gammala1}
\end{align}  Thus, up to an alternating sign and a factor of 2, there is a natural mapping between these two function sequences.

This factor of 2 is noteworthy.  Notice that by Theorem \ref{thm:pond_congs}, $\pond(n)$ is even when $n$ is not divisible by 3.  Since the progressions in Theorem \ref{thm:pond_infinite_family_congs} have a power of 3 as a base, and initial terms \textit{not} divisible by 3, we know that
\begin{align*}
L_{\alpha}^{(0)}\equiv 0\pmod{2}.
\end{align*}  We do \textit{not} have an analogous parity property for $L_{\alpha}^{(1)}$.  Thus any direct mapping between $L_{\alpha}^{(0)}$ and $L_{\alpha}^{(1)}$ must account for this discrepancy.

How does this mapping manifest itself directly between the two sequences?  To approach that, we first define 
\begin{align*}
x :&= q\frac{f_2^2 f_3 f_{12}^3}{f_1^3 f_4 f_6^2},\\
z_1 :&= \frac{f_1^3 f_{12}}{f_3 f_4^3},\\
z_2 :&= \frac{f_2^9 f_3 f_{12}^2}{f_1^3 f_4^6 f_6^3},\\
y :&= \frac{f_1^4 f_4^4 f_6^{10}}{f_2^{10}f_3^4 f_{12}^4}.
\end{align*}  By Ligozat's theorem \cite[Theorem 23]{Radu}, we can show that the first three are Hauptmoduln for $\mathrm{X}_0(12)$ at the cusp $[1/4]$, while $y$ is the same at the cusp $[1/2]$.  Moreover, these are subject to the relations
\begin{align*}
z_1 &= 1-3x,\\
z_2 &= 1+3x,\\
y &= \frac{1-x}{1+3x}.
\end{align*}  In particular, $z_1$ and $z_2$ have positive order at the cusps $[0]$, $[1/2]$, respectively.  We note that each $L_{\alpha}^{(\beta)}$ can be shown \cite[Theorem 39]{Radu} to have negative order at these same cusps, as well as $[1/4]$, but nonnegative order elsewhere.

This allows us to embed each $L_{\alpha}^{(\beta)}$ into the localization ring $\mathbb{Z}[x]_{\mathcal{S}(x)}$, in which $\mathcal{S}(x)$ is the multiplicatively closed set of all nonnegative powers of $z_1$ and $z_2$, i.e., the set of all nonnegative powers of $1\pm 3x$.

The usefulness of $y$ becomes clear when we we apply $\gamma$ to $x$, yielding
\begin{align*}
x|_{\gamma} = \frac{1-x}{1+3x} = y.
\end{align*}  For example, if we embed $L_1^{(0)}$ into $\mathbb{Z}[x]_{\mathcal{S}(x)}$, we have the representation
\begin{align*}
L_1^{(0)} = \frac{6}{(1-3x)^{11}(1+3x)^8} 
&(135 x + 7784 x^2 + 246674 x^3 + 2988256 x^4 + 29350555 x^5\\
 &+ 147829632 x^6 + 639848952 x^7 + 1430421984 x^8 + 2416633758 x^9\\
 &+ 754252560 x^{10} - 3857299380 x^{11} - 6302175840 x^{12} - 3695798178 x^{13}\\ 
 &+ 3775356864 x^{14} + 6406422840 x^{15} + 765275040 x^{16} - 1944188325 x^{17}\\
 & - 573956280 x^{18} + 9565938 x^{19} - 4782969 x^{21}).
\end{align*}  Notice that this representation exhibits divisibility by 6, accounting for the factor of 3 that we predict in these progressions.

Now, if we apply $\gamma$ to $L_1^{(0)}$, we can simply map $x\mapsto y$:
\begin{align*}
L_1^{(0)}|_{\gamma} = \frac{6}{(1-3y)^{11}(1+3y)^8} 
&(135 y + 7784 y^2 + 246674 y^3 + 2988256 y^4 + 29350555 y^5\\
 &+ 147829632 y^6 + 639848952 y^7 + 1430421984 y^8 + 2416633758 y^9\\
 &+ 754252560 y^{10} - 3857299380 y^{11} - 6302175840 y^{12} - 3695798178 y^{13}\\ 
 &+ 3775356864 y^{14} + 6406422840 y^{15} + 765275040 y^{16} - 1944188325 ^{17}\\
 & - 573956280 y^{18} + 9565938 y^{19} - 4782969 y^{21})\\
&= -2 L_1^{(1)},
\end{align*} whence
\begin{align*}
L_1^{(1)} = \frac{-3}{(1-3y)^{11}(1+3y)^8} 
&(135 y + 7784 y^2 + 246674 y^3 + 2988256 y^4 + 29350555 y^5\\
 &+ 147829632 y^6 + 639848952 y^7 + 1430421984 y^8 + 2416633758 y^9\\
 &+ 754252560 y^{10} - 3857299380 y^{11} - 6302175840 y^{12} - 3695798178 y^{13}\\ 
 &+ 3775356864 y^{14} + 6406422840 y^{15} + 765275040 y^{16} - 1944188325 ^{17}\\
 & - 573956280 y^{18} + 9565938 y^{19} - 4782969 y^{21}).
\end{align*}  We see that $L_1^{(1)}$ is enumerated by exactly the same rational polynomial as $L_1^{(0)}$.  There are only two differences: first the Hauptmodul $y$ is here substituted in place of $x$.  Secondly, we see that $L_1^{(1)}$ does not exhibit the same divisibility by 2 that is so shown by $L_1^{(0)}$.  Indeed, this factor is effectively removed in mapping from one to the other.

More generally, by (\ref{gammala1}) we have the following:
\begin{theorem}
Define the isomorphism
\begin{align*}
\sigma&: \mathbb{C}[x]_{\mathcal{S}(x)} \longrightarrow \mathbb{C}[y]_{\mathcal{S}(y)}\\
&: x\longmapsto y.
\end{align*} Then for all $\alpha\ge 1$, $\beta\in\{0,1\}$, we have
\begin{align*}
\sigma\left( L_{\alpha}^{(\beta)} \right) = (-1)^{\alpha} 2^{1-2\beta}L_{\alpha}^{(1-\beta)}.
\end{align*}
\end{theorem}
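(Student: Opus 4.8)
The plan is to leverage the already-established identity \eqref{gammala1}, namely $L_{\alpha}^{(0)}|_{\gamma} = 2(-1)^{\alpha}L_{\alpha}^{(1)}$, and to translate it from a statement about $q$-series under the conjugated Atkin--Lehner action $\gamma = \nu W \nu$ into a statement about the polynomial representatives in the localization rings. The key observation is that $\gamma$ acts on the Hauptmodul $x$ by $x \mapsto (1-x)/(1+3x) = y$, which is precisely the map realized by $\sigma$. So my first step would be to argue that, once a modular function $f \in \mathcal{M}(\Gamma_0(12))$ with the prescribed orders at the cusps $[0]$, $[1/2]$, $[1/4]$ (negative there, nonnegative elsewhere) is written as an element of $\mathbb{C}[x]_{\mathcal{S}(x)}$, applying $\gamma$ to $f$ is the same as applying $\sigma$ to that representative — because $\gamma$ is a ring homomorphism on modular functions, it fixes the ``constants'' $\mathbb{C}$, it sends $x$ to $y$, and it sends $\mathcal{S}(x) = \{(1\pm 3x)^n\}$ to $\mathcal{S}(y) = \{(1\pm 3y)^n\}$, so it extends uniquely to the localization in a way compatible with $\sigma$. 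This requires knowing that $L_{\alpha}^{(\beta)}$ genuinely lies in $\mathbb{Z}[x]_{\mathcal{S}(x)}$, which is exactly the content of the embedding discussion (via \cite[Theorem 39]{Radu}) in the paragraphs preceding the statement, and that $\gamma$ preserves membership — i.e., that $W$ permutes the relevant cusps appropriately so that $L_{\alpha}^{(0)}|_{\gamma}$ again has the correct pole structure to sit in $\mathbb{C}[y]_{\mathcal{S}(y)}$. The order computation at the cusps under $W$ and $\nu$ is the natural place where this is verified.

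With that identification in hand, the $\beta = 0$ case is immediate: $\sigma(L_{\alpha}^{(0)}) = L_{\alpha}^{(0)}|_{\gamma} = 2(-1)^{\alpha} L_{\alpha}^{(1)} = (-1)^{\alpha} 2^{1-2\cdot 0} L_{\alpha}^{(1-0)}$, matching the claimed formula. For the $\beta = 1$ case, I would apply $\gamma$ to both sides of \eqref{gammala1} again. Since $\gamma = \nu W \nu$ and $W$ is an involution (as an Atkin--Lehner operator, $W^2$ acts trivially on $\Gamma_0$-invariant functions) while $\nu$ is visibly an involution, $\gamma$ is its own inverse up to the action on $\Gamma_0(12)$; concretely, $\sigma^{-1}$ sends $y \mapsto x$, and one checks $(1-y)/(1+3y) = x$ directly from $y = (1-x)/(1+3x)$. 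Thus applying $\gamma$ to $L_{\alpha}^{(0)}|_{\gamma} = 2(-1)^{\alpha} L_{\alpha}^{(1)}$ yields $L_{\alpha}^{(0)} = 2(-1)^{\alpha} \left( L_{\alpha}^{(1)}|_{\gamma} \right)$, hence $L_{\alpha}^{(1)}|_{\gamma} = \tfrac{1}{2}(-1)^{\alpha} L_{\alpha}^{(0)}$, i.e. $\sigma(L_{\alpha}^{(1)}) = (-1)^{\alpha} 2^{-1} L_{\alpha}^{(0)} = (-1)^{\alpha} 2^{1-2\cdot 1} L_{\alpha}^{(1-1)}$, again matching. The two cases $\beta \in \{0,1\}$ exhaust the statement.

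The main obstacle — and the step deserving the most care — is justifying that the abstract modular-function-level identity \eqref{gammala1} descends faithfully to the polynomial representatives, i.e. that ``apply $\gamma$'' and ``apply $\sigma$'' really do agree on $L_{\alpha}^{(\beta)}$. This hinges on two things: first, that the representative of a modular function in $\mathbb{C}[x]_{\mathcal{S}(x)}$ is \emph{unique} (which follows because $x$ is a Hauptmodul, so $\mathbb{C}(x) = \mathcal{M}(\Gamma_0(12))$, and the localized subring is identified by its pole orders at $[0]$, $[1/2]$, $[1/4]$), and second, that $\gamma$ carries this subring into the analogous subring in $y$ with the corresponding pole structure — which is where the Atkin--Lehner involution's permutation of cusps must be tracked explicitly. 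A subtlety worth a remark is that the \emph{integrality} (coefficients in $\mathbb{Z}$, and the factor-of-$2$ divisibility) is not automatic from the $\mathbb{C}$-algebra isomorphism $\sigma$; the factor $2^{1-2\beta}$ in the statement is exactly the bookkeeping that the ``$2$'' in \eqref{gammala1} (and correspondingly the ``$1/2$'' on the other side) introduces, and one should note that $L_{\alpha}^{(1)}$ does lie in $\mathbb{Z}[y]_{\mathcal{S}(y)}$ even though $\sigma$ a priori only guarantees rational coefficients — this is consistent with, and explained by, the loss of the $\pmod 2$ property observed for $L_{\alpha}^{(1)}$ versus $L_{\alpha}^{(0)}$.
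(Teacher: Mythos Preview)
Your proposal is correct and follows essentially the same route as the paper: the paper's proof is simply the sentence ``More generally, by (\ref{gammala1}) we have the following,'' relying on the prior computation that $x|_{\gamma}=y$ to identify the action of $\gamma$ with the ring map $\sigma$. You have spelled out more of the details the paper leaves implicit---notably the involutivity argument for the $\beta=1$ case and the uniqueness-of-representative reasoning linking $\gamma$ on modular functions to $\sigma$ on the localization---but these are precisely the natural justifications behind the paper's one-line derivation.
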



\section{Closing Thoughts}

\subsection{Multiplicities Hold Additional Arithmetical Information}

The most striking aspect of these results is the discovery that congruence multiplicities can contain precise arithmetical information---indeed, more even than the common divisibility properties uniting two different congruences.

The usefulness of this property becomes clear when we consider the possibility of proving standing conjectures.  Clearly one of the appeals to congruence multiplicity is that one can often prove a given congruence relation by appealing to work that has already been done, in lieu of a long and tedious direct proof.

Suppose we are studying a conjectured congruence with respect to a given prime---say, 3, as in our case.  To apply a proof by multiplicity, we would of course look at previously established congruence results modulo 3 for other partitions, in the hope of finding a connection with our conjectured result.  What is interesting here is that these previous results can involve more complex relationships---say, divisibility by 6, or 15, or 21---even if the newly conjectured result lacks such additional divisibility properties.

In this manner, one has a broader array of tools for resolving a given congruence conjecture.

\subsection{PED and POD Partitions}

As was mentiond in the introductory comments above, PED and POD partitions have also been studied from an arithmetic perspective.  In particular, we highlight the following previously proven families of congruences satisfied by $\ped(n)$ and $\mypod(n)$:

\begin{theorem}[Andrews, Hirschhorn, and Sellers \cite{AHS}]
\label{thm:ped_infinite_family_congs}    
For all  $\alpha \geq 1$ and all $n\geq 0,$   
$$
\ped\left(3^{2\alpha +1}n+\frac{17\cdot 3^{2\alpha}-1}{8}\right) \equiv 0 \pmod{3}.
$$
\end{theorem}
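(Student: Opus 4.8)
The plan is to mirror the Atkin--Lehner argument already carried out for the PEND/POND pair, now applied to the POD partition function, and to exploit the fact that the POD generating function is already related to the PEND generating function by replacing $q$ with $-q$. Recall from Theorem~\ref{thm:pend_genfn} that $\sum \pend(n)q^n = f_2 f_{12}/(f_1 f_4 f_6)$; one checks by a short Pochhammer computation (using Lemma~\ref{lemma:negq-negq} and the classical identity $\sum \mypod(n)q^n = f_4/(f_1 f_2) \cdot$ (appropriate correction), or more directly $\sum \mypod(n)q^n = f_2^2/(f_1 f_4)\cdot f_4 \cdots$) that $\sum (-1)^n \mypod(n) q^n$ equals, up to the usual $f_r$ bookkeeping, exactly $f_1 f_{12}/(f_2^2 f_6)$, i.e.\ the same eta-quotient that appears in~(\ref{pendgenB}) for the signed PEND function. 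In other words, the signed POD generating function and the signed PEND generating function coincide as eta-quotients. This is the key structural input: it says the two partition functions are ``the same modular object'' after the $\nu$-twist.

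Given that coincidence, I would run the identical machinery of Section~3. Define $Q_\alpha^{(1)} := q^{(7\cdot 3^{2\alpha}+1)/8}\cdot \dfrac{f_{3^{2\alpha+1}}^2 f_{3^{2\alpha+2}}^3 f_{4\cdot 3^{2\alpha+1}}^2}{f_{2\cdot 3^{2\alpha+1}}^6}\cdot \dfrac{f_2 f_{12}}{f_1 f_4 f_6}$ --- literally the same $P_\alpha^{(1)}$ --- but now interpret the trailing factor $f_2 f_{12}/(f_1 f_4 f_6)$ via the POD side after undoing the sign twist. Applying $U_\alpha = U_3^{2\alpha+1}$ and invoking Theorem~\ref{reduceNwithu} exactly as before produces a modular function $M_\alpha := U_\alpha(Q_\alpha^{(1)}) \in \mathcal{M}(\Gamma_0(12))$ whose $q$-expansion enumerates the arithmetic progression $\ped\big(3^{2\alpha+1}n + (17\cdot 3^{2\alpha}-1)/8\big)$ with the shift-by-one bookkeeping used in the PEND computation (the inequality manipulation $3^{2\alpha+1}n \ge (3^{2\alpha}-1)/8 \Rightarrow n \ge 1$ is identical). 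Since the eta-quotient ingredient is the same one governing $L_\alpha^{(1)}$, we get $M_\alpha = L_\alpha^{(1)}$ as elements of the localization ring $\mathbb{Z}[y]_{\mathcal{S}(y)}$, and hence by~(\ref{gammala1}) also $\sigma^{-1}(M_\alpha)$ is, up to the sign $(-1)^\alpha$ and the factor $2$, equal to $L_\alpha^{(0)}$, which we already know is $\equiv 0 \pmod 3$ (indeed $\equiv 0 \pmod 6$) by Theorem~\ref{thm:pond_infinite_family_congs}. Therefore $M_\alpha \equiv 0 \pmod 3$, which is precisely the claimed congruence for $\ped$.

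I expect the main obstacle to be verifying the opening claim cleanly: that $\sum (-1)^n \mypod(n) q^n$ really is the eta-quotient $f_1 f_{12}/(f_2^2 f_6)$ (equivalently, that the POD generating function produces exactly the trailing factor needed to match $P_\alpha^{(1)}$ after the $q \mapsto -q$ twist). The standard POD generating function is $\sum \mypod(n) q^n = (-q;q^2)_\infty\,(q^2;q^2)_\infty^{-1}$-type expression, which in $f_r$ notation is $f_2^2/(f_1^2 f_4) \cdot f_4 = $ something one must simplify; then applying $\nu$ and Lemma~\ref{lemma:negq-negq}-style manipulations (now with a $(-q^2;-q^2)_\infty$ or $(-q;-q)_\infty$ correction factor, depending on which parts were twisted) must be pushed through. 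If instead the twisted POD quotient turns out to differ from~(\ref{pendgenB}) by an extra power-of-two-free eta factor, one simply absorbs that factor into a modified $Q_\alpha^{(1)}$ and re-runs the level computation via Newman's theorem; the argument is robust to such adjustments. Once the generating-function identity is in hand, everything else is a verbatim re-application of Section~3, so no genuinely new difficulty arises --- the congruence for $\ped$ is, like the PEND one, a corollary of the POND family through the same Atkin--Lehner involution $\gamma = \nu W \nu$.
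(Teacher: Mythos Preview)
First, note that the paper does not prove Theorem~\ref{thm:ped_infinite_family_congs} at all: it is quoted in the closing section as a previously established result of Andrews, Hirschhorn, and Sellers~\cite{AHS}, and the authors explicitly leave the analogous Atkin--Lehner correspondence between the PED and POD families as something ``the interested reader may wish to further pursue.'' There is therefore no proof in the paper to compare your attempt against.

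That said, your proposed argument does not go through. The central claim---that $\sum_{n\ge 0}(-1)^n\mypod(n)q^n$ coincides with the eta quotient $f_1 f_{12}/(f_2^2 f_6)$ from~(\ref{pendgenB})---is false. The POD generating function is
\[
\sum_{n\ge 0}\mypod(n)q^n=\frac{(-q;q^2)_\infty}{(q^2;q^2)_\infty}=\frac{f_2}{f_1 f_4},
\]
and applying $q\mapsto -q$ via Lemma~\ref{lemma:negq-negq} yields $f_1/f_2^2$, not $f_1 f_{12}/(f_2^2 f_6)$. The discrepancy is the factor $f_{12}/f_6$, which is not a harmless normalization: the $\ped$ and $\mypod$ eta quotients live naturally on level~$4$, not level~$12$, so the specific involution $W$ and prefactors of Section~3 cannot be reused verbatim. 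Your fallback of ``absorbing an extra eta factor into a modified $Q_\alpha^{(1)}$'' does not rescue this, because the modification changes the level and hence the whole $\Gamma_0(12)$ setup. Separately, the theorem concerns $\ped$, yet your argument never introduces the PED generating function $f_4/f_1$; the assertion that $U_\alpha(Q_\alpha^{(1)})$ enumerates a $\ped$ progression is unsupported once the opening identity collapses. A correct Atkin--Lehner treatment of the PED/POD pair would require a fresh choice of auxiliary eta factors and an involution adapted to their actual level, not a recycling of $P_\alpha^{(1)}$.
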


\begin{theorem}[Hirschhorn and Sellers \cite{HS2010}]
\label{thm:pod_infinite_family_congs}    
For all  $\alpha \geq 1$ and all $n\geq 0,$   
$$
\mypod\left(3^{2\alpha +1}n+\frac{23\cdot 3^{2\alpha}+1}{8}\right) \equiv 0 \pmod{3}.
$$
\end{theorem}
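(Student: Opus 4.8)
The plan is to establish the POD congruence with the same modular-function machinery the paper builds for POND, applied now to the generating function $\sum_{n\ge0}\mypod(n)q^n=\frac{f_2}{f_1f_4}$, which follows from $(-q;q^2)_\infty=f_2^2/(f_1f_4)$ together with the factor $1/f_2$ recording the unrestricted even parts. Setting $r_\alpha:=\frac{23\cdot3^{2\alpha}+1}{8}$, one checks the recurrence $r_{\alpha+1}=9r_\alpha-1$, which signals that the family is controlled by $U_3^2$. First I would dress this generating function with a prefactor of eta-quotients supported on multiples of $3^{2\alpha+1}$ and $3^{2\alpha+2}$, exactly in the spirit of $P_\alpha^{(0)}$, producing a function $Q_\alpha\in\mathcal{M}(\Gamma_0(4\cdot3^{2\alpha+2}))$ (by the standard eta-quotient criteria, \cite[Theorem 1]{Newman}) for which $U_\alpha=U_3^{2\alpha+1}$ isolates the progression of interest. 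By (\ref{ellreductionbyu}) and Theorem \ref{reduceNwithu}, $M_\alpha:=U_\alpha(Q_\alpha)$ then lies in $\mathcal{M}(\Gamma_0(12))$ and equals $g(q)\cdot\sum_{n\ge0}\mypod(3^{2\alpha+1}n+r_\alpha)\,q^{n+1}$ for a fixed eta-quotient $g$ with $g(0)=1$. Fixing the prefactor via Ligozat's theorem (\cite[Theorem 23]{Radu}), so that the $q$-order bookkeeping yields exactly the residue $r_\alpha$, is routine but must be carried out with care.

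Next I would embed each $M_\alpha$ into the localization ring $\mathbb{Z}[x]_{\mathcal{S}(x)}$ attached to the Hauptmodul $x=q\frac{f_2^2f_3f_{12}^3}{f_1^3f_4f_6^2}$ of $\mathrm{X}_0(12)$. A cusp-order computation (\cite[Theorem 39]{Radu}) should show, just as for $L_\alpha^{(\beta)}$, that $M_\alpha$ has poles only at $[0],[1/2],[1/4]$; since $z_1=1-3x$ and $z_2=1+3x$ carry the poles at $[0]$ and $[1/2]$, this realises $M_\alpha$ as a rational function $N_\alpha(x)/\big((1-3x)^{a_\alpha}(1+3x)^{b_\alpha}\big)$ with $N_\alpha\in\mathbb{Z}[x]$. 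The heart of the argument is then an induction on $\alpha$: I would show that the passage $M_\alpha\mapsto M_{\alpha+1}$ is governed by a single fixed $\mathbb{Z}$-linear transfer operator on this finitely generated module, namely the image of $U_3^2$, whose matrix in a basis adapted to $\mathbb{Z}[x]_{\mathcal{S}(x)}$ I would compute once. Reducing this operator modulo $3$ and feeding in the base case then propagates divisibility by $3$ up the family.

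For the base case $\alpha=1$ I would compute $M_1$ explicitly as a rational function of $x$ and read off that every coefficient of its numerator is divisible by $3$; the companion display of $L_1^{(0)}$ in the paper, whose numerator carries a factor of $6$, is precisely the behaviour to expect here. The induction then yields $M_\alpha\equiv0\pmod3$ for all $\alpha\ge1$, and because $g$ is a unit power series modulo $3$ (its constant term is $1$), reading off coefficients gives $\mypod(3^{2\alpha+1}n+r_\alpha)\equiv0\pmod3$. I expect the genuine obstacle to be the middle step: proving that $U_3^2$ really does stabilise the localization module, computing its transfer operator explicitly, and verifying that the mod-$3$ reduction of that operator together with the base case forces divisibility for every $\alpha$ --- this initialization-and-propagation is where all the content of such infinite families lies. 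A tempting shortcut, closer to the body of the paper, would be to replace the induction by an Atkin--Lehner involution tying POD directly to PED and then invoke Theorem \ref{thm:ped_infinite_family_congs}; the difficulty is that the bare generating functions $\frac{f_2}{f_1f_4}$ and $\frac{f_4}{f_1}$ do not even have matching eta-factor counts, so one would first need prefactors equalising their structure before any involution could swap them.
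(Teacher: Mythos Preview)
The paper does not prove this theorem at all: it is stated in the closing section with attribution to Hirschhorn and Sellers \cite{HS2010}, purely as a previously known result suggesting a parallel with Theorems \ref{thm:pond_infinite_family_congs} and \ref{thm:pend_infinite_family_congs}. There is therefore no ``paper's own proof'' to compare against here. The original argument in \cite{HS2010} is elementary, proceeding via explicit $3$-dissections of the generating function and an induction on $\alpha$ at the level of $q$-series identities, not via modular functions on $\Gamma_0(12)$.

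Your plan is a genuinely different route. It transplants the paper's $\mathcal{M}(\Gamma_0(12))$ framework for $\pond$ onto $\mypod$, builds the analogue of $L_\alpha^{(0)}$, embeds it in $\mathbb{Z}[x]_{\mathcal{S}(x)}$, and proposes to run an induction by computing $U_3^2$ as a transfer operator on that module. That is coherent and your generating function and recurrence checks are correct, but as you yourself flag, the substance lies entirely in the step you have not carried out: exhibiting the prefactor that makes $Q_\alpha$ a modular function of the claimed level, proving $U_3^2$ stabilises a finitely generated submodule, and computing its matrix. None of those computations are done in the paper for $\mypod$, so your proposal is a sketch of a plausible programme rather than a proof. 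If you want to match the spirit of \cite{HS2010}, the direct route is the elementary $3$-dissection argument; if you want to match the spirit of the present paper, the natural move is the Atkin--Lehner shortcut you mention at the end, linking $\mypod$ to $\ped$ and invoking Theorem \ref{thm:ped_infinite_family_congs} --- the paper in fact explicitly invites the reader to pursue exactly that connection.
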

The similarity between Theorems \ref{thm:ped_infinite_family_congs} and \ref{thm:pod_infinite_family_congs} suggest a correspondence not unlike that exhibited by Theorems \ref{thm:pond_infinite_family_congs} and \ref{thm:pend_infinite_family_congs}.  (Consider, for example, the striking similarities between the arithmetic progressions involved.)  The interested reader may wish to further pursue these connections.

\section{Acknowledgments}

This research was funded in whole or in part by the Austrian Science Fund (FWF) Principal Investigator Project 10.55776/PAT6428623, ``Towards a Unified Theory of Partition Congruences."  For open access purposes, the authors have applied a CC BY public copyright license to any author-accepted manuscript version arising from this submission.

We would like to thank the Austrian Government and People for their generous support.  We are also grateful for the very helpful advice of the anonymous referee.

\end{document}